\newcommand{\CC}{{\mathcal C}}
\newcommand{\F}{{\mathcal F}}
\newcommand{\G}{{\mathcal G}}
\newcommand{\SV}{{\mathcal S}}
\newcommand{\R}{{\mathbb R}}
\newcommand{\N}{{\mathbb N}}
\newcommand{\C}{{\mathbb C}}
\newcommand{\diag}{\operatorname{diag}}
\newcommand{\tr}{\operatorname{tr}}
\def\B{{\mathcal B}}
\def\C{{\mathbb C}}
\def\P{{\mathcal P}}
\def\R{{\mathcal R}}
\def\T{{\mathcal T}}
\def\Rho{{\sf P}}
\def\J{{\sf J}}
\def\f{{\frac{n}{2}}}
\def\st{\stackrel{\text{def}}{=}}
\newtheorem{defn}{Definition}[section]
\newtheorem{conj}{Conjecture}[section]
\newtheorem{propo}{Proposition}[section]
\newtheorem{lemm}{Lemma}[section]
\newtheorem{corr}{Corollary}[section]
\newtheorem{ex}{Example}[section]
\numberwithin{equation}{section}
\numberwithin{table}{section}
\title{Universal recursive formulae for $Q$-curvatures}
\author{Carsten Falk}
\address{Humboldt-Universit\"at, Institut f\"ur Mathematik, Unter den Linden,
10099 Berlin}
\email{falk@math.hu-berlin.de}
\author{Andreas Juhl}
\address{Humboldt-Universit\"at, Institut f\"ur Mathematik, Unter den Linden,
10099 Berlin}
\email{ajuhl@math.hu-berlin.de}
\begin{document}

\begin{abstract} We formulate and discuss two conjectures concerning
recursive formulae for Branson's $Q$-curvatures. The proposed
formulae describe all $Q$-curvatures on manifolds of all even
dimensions in terms of respective lower order $Q$-curvatures and
lower order GJMS-operators. They are universal in the dimension of
the underlying space. The recursive formulae are generated by an
algorithm which rests on the theory of residue families of
\cite{juhl}. We attempt to resolve the algorithm by formulating a
conjectural description of the coefficients in the recursive
formulae in terms of interpolation polynomials associated to
compositions of natural numbers. We prove that the conjectures cover
$Q_4$ and $Q_6$ for general metrics, and $Q_8$ for conformally flat
metrics. The result for $Q_8$ is proved here for the first time.
Moreover, we display explicit (conjectural) formulae for
$Q$-curvatures of order up to $16$, and test high order cases for
round spheres and Einstein metrics.
\end{abstract}

\maketitle

\centerline \today

\tableofcontents

\renewcommand{\thefootnote}{}

\footnotetext{The work of the second author was supported by SFB 647
``Raum-Zeit-Materie'' of DFG.}

\footnotetext{MSC 2000: Primary 53B20, 53C20, Secondary 53A30,
58J50.}

\section{Introduction}\label{intro}

For any Riemannian manifold $(M,h)$ of even dimension $n$, there is
a finite sequence $P_{2N}(h)$ ($1 \le N \le \frac{n}{2}$) of natural
differential operators on functions on $M$ with leading part $\Delta_h^N$
which transform as
$$
e^{(\f+N)\varphi} \circ P_{2N}(\hat{h}) \circ e^{-(\f-N)\varphi} = P_{2N}(h)
$$
under conformal changes $\hat{h} = e^{2\varphi}h$ of the metric.
These operators were derived in \cite{GJMS} from the powers of the
Laplacian of the Fefferman-Graham ambient metric (see \cite{cartan}
and \cite{FG-final}). For $2N > n$, the construction in \cite{GJMS} is
obstructed by the Fefferman-Graham tensor. More sharply, in that range
it is impossible to construct a conformally covariant operator (for
all metrics) by adding lower order terms to $\Delta^N$ (\cite{GJMS-2},
\cite{G-H}). On the other hand, if such operators exist, they are not
uniquely determined by conformal covariance. In the following,
$P_{2N}$ will denote the operators constructed in \cite{GJMS}, and
they will be referred to as the GJMS-operators.

$P_2$ and $P_4$ are the well-known Yamabe and Paneitz operator which
are given by
\begin{align*}
P_2 & = \Delta - \left(\f\!-\!1\right) \J, \\
P_4 & = \Delta^2 + \delta( (n\!-\!2) \J \!-\! 4 \Rho ) \# d 
+ \left(\f\!-\!2\right)
\left( \f \J^2 \!-\! 2 |\Rho|^2 \!-\! \Delta \J \right),
\end{align*}
respectively. Here
$$
\Rho = \frac{1}{n\!-\!2} \left(Ric\!-\!\frac{\tau}{2(n\!-\!1)}h\right)
$$
denotes the Schouten tensor of $h$, $\tau$ denotes the scalar
curvature, and $\J = \frac{\tau}{2(n-1)}$ is the trace of $\Rho$.
$\#$ denotes the natural action of symmetric bilinear forms on
$1$-forms. Explicit expressions for the higher order operators
$P_{2N}$ for $N \ge 3$ are considerably more complicated.

The GJMS-operators $P_{2N}$ give rise to a finite sequence $Q_{2N}$
($1 \le N \le \frac{n}{2}$) of Riemannian curvature invariants
according to
\begin{equation}\label{q-def}
P_{2N}(h)(1) = (-1)^N \left(\f \!-\!N \right) Q_{2N}(h)
\end{equation}
(see \cite{comp}). $Q_{2N}$ is a curvature invariant of order $2N$,
i.e., it involves $2N$ derivatives of the metric. In the following,
the quantities $Q_{2N}(h)$ will be called the {\em $Q$-curvatures} of
$h$.

In particular, we find
\begin{equation}\label{q-2-4}
Q_2 = \J \quad \mbox{and} \quad Q_4 = \f \J^2 \!-\! 2 |\Rho|^2
\!-\!\Delta \J.
\end{equation}
Explicit formulae for $Q_{2N}$ for $N \ge 3$ are considerably more
complicated.

The critical GJMS-operator $P_n$ and the critical $Q$-curvature $Q_n$
play a special role. In that case, \eqref{q-def} does not define
$Q_n$, however. Instead, $Q_n$ arises by continuation in dimension
from the subcritical $Q$-curvatures $Q_{2N}$ ($2N<n$). The pair
$(P_n,Q_n)$ satisfies the fundamental identity
\begin{equation}\label{FI}
e^{n \varphi} Q_n(\hat{h}) = Q_n(h) + (-1)^\f P_n(h)(\varphi).
\end{equation}
It shows that the transformation of $Q_n$ under conformal changes of
$h$ is governed by the {\em linear} differential operator $P_n$. This
is one of the remarkable properties of Branson's $Q$-curvature
$Q_n$. \eqref{FI} implies that, for closed $M$, the total
$Q$-curvature
\begin{equation}\label{totalQ}
\int_M Q_n vol
\end{equation}
is a {\em global} conformal invariant.

Despite the simple formulae \eqref{q-2-4}, it remains notoriously
difficult to find good expressions for $Q$-curvatures of higher
order. Explicit formulae for $Q_6$ and $Q_8$ in arbitrary dimension
were given in \cite{G-P}. For conformally flat metrics and general
dimensions, $Q_6$ already appeared in \cite{comp}.

It is natural to expect that the complexity of the quantities $Q_{2N}$
increases exponentially with the order. This is one of the aspects in
which its behaviour resembles that of the heat coefficients of
self-adjoint elliptic differential operators. The relations between
both quantities are much more substantial, though.  The problem to
understand the structure of heat coefficients of conformally covariant
operators was actually one of the origins of the notion of
$Q$-curvature \cite{branson-book}. Explicit formulae for heat
coefficients are known only for sufficiently small orders.  There is
an extensive literature devoted to such formulae (see \cite{vasil} for
a recent review).

The lack of information concerning the structure of high order
$Q$-curvatures presently seems to obstruct the understanding of its
nature and its proper role in geometric analysis (see \cite{malch}
for a review in dimension $4$).

In the present work we propose a uniform description of all
$Q$-curvatures with the following main features.
\begin{itemize}
\item [1.] Any $Q$-curvature is the sum of two parts of different
nature.
\item [2.] The main part is a linear combination of respective
lower order GJMS-operators acting on lower order $Q$-curvatures with
coefficients which do not depend on the dimension of the underlying
space.
\item [3.] The second part is defined in terms of the constant term of
a power of the Yamabe-operator of an associated Poincar\'e-Einstein metric.
\end{itemize}
These properties motivate to refer to the proposed formulae as {\em
universal} and {\em recursive}.

In more detail, Conjecture \ref{main-c} asserts that on manifolds of even
dimension $n$,
\begin{equation}\label{GRQ}
Q_{2N} = \sum_I a_I^{(N)} P_{2I}(Q_{2N-2|I|}) + (-1)^{N-1}
\frac{(2N\!-\!2)!!}{(2N\!-\!3)!!} i^* \bar{P}_2^{N-1}(\bar{Q}_2)
\end{equation}
for all non-negative integers $N$ so that $2N \le n$. The rational
coefficients $a_I^{(N)}$ are generated by an algorithm which will be
defined in Section \ref{status}. The sum in \eqref{GRQ} runs over all
compositions $I$ of integers in $[1,N-1]$ as sums of natural
numbers. Moreover, we use the following notation. For a composition
$I=(I_1,\dots,I_m)$ of size $|I| = \sum_i I_i$, we set
$$
P_{2I} = P_{2I_1} \circ \dots \circ P_{2I_m}.
$$
In \eqref{GRQ} for the metric $h$, the operator $\bar{P}_2$ denotes
the Yamabe operator of the conformal compactification $dr^2+h_r$ of
the Poincar\'e-Einstein metric of $h$ (the relevant constructions are
reviewed in Section \ref{theory}). Similarly, $\bar{Q}_2$ is $Q_2$ for
the metric $dr^2+h_r$, and $i^*$ restricts functions to $r=0$.

Alternatively, the quantity $i^*\bar{P}_2^{N-1}(\bar{Q}_2)$ can be
written in the form
$$
-\frac{n\!-\!1}{2} i^* \bar{P}_2^N (1).
$$
However, we prefer to use the form \eqref{GRQ} which hides the
dimension $n$ of the underlying space.

The existence of recursive formulae for general $Q_{2N}$ has been an
open problem since the invention of $Q$-curvature. \eqref{GRQ}
proposes some answer.

One might also ask for recursive formulae for $Q_{2N}$ which rest
only on lower order GJMS-operators and lower order $Q$-curvatures of
the given metric. In view of the contribution $i^*
\bar{P}_2^{N-1}(\bar{Q}_2)$, the formula \eqref{GRQ} is {\em not} of
this form. However, already for $N=2$ such formulae are unlikely to
exist since $Q_4$ depends on the full Ricci tensor whereas $P_2$ and
$Q_2$ only depend on scalar curvature.

The presentations \eqref{GRQ} imply that the structure of the
constant term of any GJMS-operator is influenced by {\em all} lower
order GJMS-operators. This illustrates the enormous complexity of
the GJMS operators. The recursive structure for $Q$-curvature seems
to be a phenomenon which is not known to have analogs for related
quantities as, for instance, the heat coefficients (see
\eqref{conf-ind}).

Next, we make explicit \eqref{GRQ} for $Q_4$, $Q_6$ and $Q_8$. In
these cases, the asserted formulae are theorems and we briefly
indicate their proofs. We start with a version for $Q_2$. It just
says that
\begin{equation}\label{q2-ex}
Q_2 = i^* \bar{Q}_2
\end{equation}
in all dimensions (see \eqref{u2}). Next, the universal recursive
formula for $Q_4$ states that \label{calcul}
\begin{equation}\label{q4-ex}
Q_4 = P_2 (Q_2) - 2 i^* \bar{P}_2 (\bar{Q}_2).
\end{equation}
This formula is valid in all dimensions $n \ge 4$, i.e.,
\eqref{q4-ex} is universal. In fact, it reads
$$
Q_4 = \left(\Delta \!-\! \frac{n\!-\!2}{2}\J \right)(\J) - 2 i^*
\left( (\partial/\partial r)^2 + \Delta_{h_r} - \frac{n\!-\!1}{2}
\bar{Q}_2 \right) (\bar{Q}_2)
$$
(see Section \ref{theory} for the notation). Using $i^* \bar{Q}_2 =
Q_2 = \J$ (see \eqref{q2-ex}) and
$$
i^*(\partial/\partial r)^2 (\bar{Q}_2) = |\Rho|^2,
$$
the sum simplifies to
$$
\frac{n}{2} \J^2 - 2|\Rho|^2 -\Delta \J.
$$
This shows the equivalence of \eqref{q4-ex} and the traditional
formula \eqref{q-2-4} for $Q_4$. The presentation \eqref{q4-ex} is
distinguished by the fact that it is uniform in all dimensions. A
disadvantage of \eqref{q4-ex} is that the fundamental transformation
law \eqref{FI} in the critical dimension $n=4$ is less obvious from
this formula. In this aspects, \eqref{q4-ex} resembles the holographic
formula \eqref{holo-4}.

Next, we have the recursive formula
\begin{equation}\label{q6-ex}
Q_6 = \frac{2}{3} P_2(Q_4) + \left[-\frac{5}{3} P_2^2 + \frac{2}{3}
P_4\right](Q_2) + \frac{8}{3} i^* \bar{P}_2^2(\bar{Q}_2)
\end{equation}
for $Q_6$ in all dimensions $n \ge 6$. A detailed proof of
\eqref{q6-ex} can be found in \cite{juhl}. It is a special case of
the algorithm of Section \ref{status}.

For $n=6$, the holographic formula \eqref{holo} of \cite{gj-q}
presents $Q_6$ in the form
\begin{equation}\label{q6-bach}
Q_6 = 16 \tr (\Rho^3) - 24 \J |\Rho|^2 + 8 \J^3 + 8 (\B,\Rho) +
\mbox{divergence terms},
\end{equation}
where $\B$ denotes a version of the Bach tensor. The recursive
formula \eqref{q6-ex} covers the contribution $(\B,\Rho)$ in
\eqref{q6-bach} by the term
$$
\frac{8}{3}(\partial/\partial r)^4|_0 (\bar{Q}_2).
$$
This illustrates the role of the term which involves $\bar{P}_2$ and
$\bar{Q}_2$. An extension of this observation to the general case
will be discussed in Section \ref{status}.

We also note that \eqref{q6-ex} is equivalent to a formula of Gover
and Peterson \cite{G-P}. For a proof of this fact we refer to \cite{juhl}.

We continue with the description of the recursive formula for $Q_8$.
In the critical dimension $n=8$, the algorithm of Section
\ref{status} yields
\begin{multline}\label{q8-ex}
Q_8 = \frac{3}{5} P_2(Q_6)
+ \left[ - 4 P_2^2 + \frac{17}{5} P_4 \right](Q_4) \\
+ \left[ -\frac{22}{5} P_2^3 + \frac{8}{5} P_2 P_4 + \frac{28}{5}
P_4 P_2 - \frac{9}{5} P_6 \right](Q_2) - \frac{16}{5} i^*
\bar{P}_2^3(\bar{Q}_2)
\end{multline}
for locally conformally flat metrics (Proposition \ref{flat-crit-8}).
Using a second algorithm, we prove that \eqref{q8-ex} holds true in
all dimensions $n \ge 8$ (Proposition \ref{flat-8}). It remains open,
whether \eqref{q8-ex} extends to general metrics. The relation between
\eqref{q8-ex} and the Gover-Peterson formula \cite{G-P} for $Q_8$ is
not yet understood.

For $N \ge 5$, Conjecture \ref{main-c} enters largely unexplored
territory. We outline the algorithm which generates the
presentations \eqref{GRQ}. First, we generate such a presentation
for the critical $Q$-curvature $Q_n$. For this, we apply an
algorithm which rests on the relation of the critical $Q$-curvature
$Q_n$ to the quantity
$$
\dot{D}_n^{res}(0)(1)
$$
and the recursive structure of all residue families
$D_{2N}^{res}(\lambda)$ for $2N \le n$. We refer to Section
\ref{theory} for the definition of the relevant concepts. The details
of the algorithm are explained in Section \ref{status}. An important
argument which enters into the algorithm is the principle of {\em
  universality}. It plays the following role. The algorithm for $Q_n$
uses the assumption that the analogously generated presentations of
{\em all} lower order $Q$-curvatures $Q_{2N}$, $N=1,\dots,\f-1$ hold
true on manifolds of dimension $n$. In particular, the derivation of
\eqref{q6-ex} in dimension $n=6$, uses the facts that \eqref{q2-ex}
and \eqref{q4-ex} hold true in dimension $n=6$. Similarly, the
derivation of \eqref{q8-ex} in dimension $n=8$ applies the facts that
the formulae \eqref{q2-ex}, \eqref{q4-ex} and \eqref{q6-ex} hold true
in $n=8$. Under the assumption of universality, the algorithm
generates a formula for $Q_n$. Since universality is open, the
identification of the resulting formula with $Q_n$ is only
conjectural. Conjecture \ref{main-c} asserts that the resulting
formula for $Q_n$ again is universal, i.e., holds true in all
dimensions $>n$. In order to apply the factorization identities of
residue families we restrict to conformally flat metrics. In low order
cases, this restriction can be removed. It hopefully is superfluous in
general.

With these motivations, it becomes important to describe the structure
of the right-hand sides of \eqref{GRQ} generated by the above
algorithm. Although the algorithm only involves linear algebra, the
complexity of calculations quickly increases with $N$. In particular,
we were unable to find closed formulae for the coefficients
$a_I^{(N)}$.

Instead, we describe an attempt to resolve the algorithm by relating
it to another much simpler algorithm which deals with polynomials
instead of operators. More precisely, we introduce an algorithm for
the generation of a system of polynomials. It associates a canonical
polynomial $r_I$ to any composition $I$. The degree of the polynomial
$r_I$ is $2|I|-1$. Conjecture \ref{para} relates, for any $I$, the
restriction of $r_I$ to $\N$ to the function $N \mapsto
a_I^{(N)}$. The formulation of this conjecture results from an
analysis of computer assisted calculations of the coefficients
$a_I^{(N)}$. In particular, such calculations indicate that the
functions $N \to a_I^{(N)}$ can be described by interpolation
polynomials. A deeper analysis of the numerical data leads to a
description of these polynomials in terms of other interpolation
problems.

We describe the content of Conjecture \ref{para} for the
coefficients of
$$
P_{2k}(Q_{2N-2k}), \; N \ge k+1
$$
and
$$
P_{2j} P_{2k} (Q_{2N-2j-2k}), \; N \ge j+k+1.
$$

For $k \ge 1$, let $r_{(k)}$ be the unique polynomial of degree
$2k-1$ which is characterized by its $2k$ values
\begin{equation*}
r_{(k)}(-i) = 0, \quad i = 1,\dots,k-1,
\end{equation*}
and
$$
r_{(k)} \left(\frac{1}{2}-i\right) = (-2)^{-(k-1)}
\frac{\left(\frac{1}{2}\right)_{k-1}}{(k\!-\!1)!}, \quad i =
0,1,\dots,k.
$$
The second set of conditions can be replaced by the simpler
requirement that $r_{(k)}$ is constant on the set
$$
\SV(k) = \left\{\frac{1}{2}-k,\dots,-\frac{1}{2},\frac{1}{2}\right\}
$$
together with the condition that
$$
r_{(k)}(0) = (-1)^{k-1} \frac{(2k\!-\!3)!!}{k!}.
$$
Now Conjecture \ref{para} says that
\begin{equation}\label{a-GJMS}
a_{(k)}^{(N)} = \prod_{i=1}^k \left(\frac{N\!-\!i}{2N\!-\!2i\!-\!1}
\right) r_{(k)}(N\!-\!k), \; N \ge k+1.
\end{equation}

For a composition $I=(j,k)$ with two entries, we define a unique
polynomial $r_{(j,k)}$ of degree $2j+2k-1$ by the $j+k-1$ conditions
\begin{equation}\label{two-1}
r_{(j,k)}(-i) = 0, \quad i=1, \dots, j + k, \; i \ne k,
\end{equation}
the $j+k+1$ conditions
\begin{equation}\label{two-2}
r_{(j,k)}(\cdot) + r_{(j)} \left( \frac{1}{2} \right) r_{(k)}(\cdot)
= r_{(j,k)} \left( \frac{1}{2} \right) + r_{(j)}
\left(\frac{1}{2}\right) r_{(k)} \left(\frac{1}{2}\right)
\end{equation}
on the set
$$
\SV(j+k)= \left\{ \frac{1}{2},\frac{1}{2}-1,\dots,\frac{1}{2}-(j+k)
\right\},
$$
and the relation
\begin{equation}\label{two-3}
r_{(j,k)}(0) = - r_{(j)}(k) r_{(k)}(0).
\end{equation}
\eqref{two-2} can be replaced by the simpler condition that the left
hand side is constant on the set $\SV(j+k)$. The value of that
constant is determined by the additional relation \eqref{two-3} for
the constant term of $r_{(j,k)}$. Now Conjecture \ref{para} says
that
\begin{equation}\label{2-GJMS}
a_{(j,k)}^{(N)} = \prod_{i=1}^{j+k}
\left(\frac{N-i}{2N\!-\!2i\!-\!1}\right) r_{(j,k)}(N\!-\!(j\!+\!k)),
\; N \ge j+k+1.
\end{equation}

For general compositions $I$, there are analogous interpolation
polynomials $r_I$. However, the interpolation data are more
complicated. Indeed, those for $r_I$ are recursively determined by
those of polynomials $r_J$ which are associated to sub-compositions
$J$ of $I$. The corresponding recursive relations are non-linear
(see \eqref{mult-2}, \eqref{CC-const}). By iteration, they can be
used to generate $r_I$ from the polynomials $r_{(k)}$, where $k$
runs through the entries of $I$. For the details we refer to Section
\ref{structure}.

We finish the present section with a number of comments. Branson
introduced the quantity $Q_n$ in order to systematize the study of
extremal properties of functional determinants of the Yamabe operator
$P_2$ (and other conformally covariant differential operators). The
central idea is to decompose the conformal anomalies of the
determinants as sums of a universal part (given by $Q$-curvature),
locally conformally invariant parts (which vanish in the conformally
flat case) and divergence parts with {\em local} conformal primitives
(\cite{branson-book}, \cite{comp}, \cite{br-last},
\cite{bran-MW},\cite{conf-s}). The concept rests on the observation
that the heat coefficients of conformally covariant differential
operators display similar conformal variational formulae as the
$Q$-curvatures $Q_{2j}$. We briefly describe that analogy in the case
of the Yamabe operator $D=-P_2$. Assume that $D$ is positive. The
coefficients $a_j$ in the asymptotics
$$
\tr (e^{-tD}) \sim \sum_{j\ge 0} t^{\frac{-n+j}{2}} \int_M a_j vol, \; t \to 0
$$
of the trace of its heat kernel are Riemannian curvature invariants
which satisfy the conformal variational formulae
\begin{equation}\label{heat}
\left(\int_M a_j vol \right)^\bullet [\varphi]
= (n\!-\!j) \int_M \varphi a_j vol, \; \varphi \in C^\infty(M).
\end{equation}
Here the notation $^\bullet$ is used to indicate the infinitesimal
conformal variation
$$
\F^\bullet(h)[\varphi] = (d/dt)|_0 \F(e^{2t\varphi}h)
$$
of the functional $\F$. In particular, the integral
\begin{equation}\label{conf-ind}
\int_M a_n vol
\end{equation}
is a global conformal invariant. The conformal variational formula
$$
-(\log \det (D))^\bullet [\varphi] = 2 \int_M \varphi a_n vol
$$
shows the significance of $a_n$ as a conformal anomaly of the
determinant. For the details we refer to \cite{BrO-1}, \cite{BrO-3}.

The conformal invariance of \eqref{conf-ind} has strong
implications. In fact, when combined with the Deser-Schwimmer
classification of conformal anomalies (proved by Alexakis in the
fundamental work \cite{alex-final}), it implies that $a_n$ is a linear
combination of the Pfaffian, a local conformal invariant and a
divergence. The existence of such a decomposition also follows for the
global conformal invariant \eqref{totalQ}. The conformal invariance of
\eqref{totalQ} is a consequence of
$$
\left(\int_M Q_{2j} vol \right)^\bullet [\varphi] = (n\!-\!2j) \int_M
\varphi Q_{2j} vol.
$$
The problem to find explicit versions of these decompositions is
more difficult.

A third series of related scalar curvature quantities, which in recent
years naturally appeared in connection with ideas around the
AdS/CFT-correspondence, are the holographic coefficients $v_{2j}$.
These quantities describe the asymptotics of the volume form of
Poincar\'e-Einstein metrics (Section \ref{theory}). Here
\cite{chang-fang}
\begin{align*}
\left(\int_M v_{2j} vol \right)^\bullet [\varphi] & = (n\!-\!2j)
\int_M \varphi v_{2j} vol,
\end{align*}
and the integral
\begin{equation}\label{invariant}
\int_M v_n vol
\end{equation}
is a global conformal invariant \cite{GV}. $v_n$ is the conformal
anomaly of the renormalized volume of conformally compact Einstein
metrics (\cite{GV}). The problem to understand the parallel between
renormalized volumes and functional determinants is at the center of
the AdS/CFT-duality (\cite{ho-f}, \cite{HS}).

Graham and Zworski \cite{GZ} discovered that the global conformal
invariants \eqref{invariant} and \eqref{totalQ} are proportional.
Moreover, the formula (\cite{gj-q}, \cite{juhl})
\begin{equation}\label{holo}
2n c_\f Q_n = n v_n + \sum_{j=1}^{\f-1} (n\!-\!2j) \T_{2j}^*(0) (v_{n-2j})
\end{equation}
(with $c_\f = (-1)^\f \left[2^n (\f)! (\f\!-\!1)!\right]^{-1}$) for
the critical $Q$-curvature completely expresses $Q_n$ in terms of
holographic data, $v_{2j}$ and $\T_{2j}(0)$, of the given metric. For
the definition of the differential operators $\T_{2j}(0)$ we refer to
Section \ref{theory}.

In dimension $n=4$, \eqref{holo} states that
\begin{equation}\label{holo-4}
Q_4 = 16 v_4 + 2 \Delta v_2.
\end{equation}
Using $v_4 = \frac{1}{8} (\J^2-|\Rho|^2)$ and $v_2 = -\frac{1}{2} \J$,
this is equivalent to \eqref{q-2-4}.

\eqref{holo} implies that in the conformally flat case the Pfaffian
appears naturally in $Q_n$ (as predicted by the Deser-Schwimmer
classification). Although in that case all holographic coefficients
$v_{2j}$ are known, $Q_n$ is still very complex. The complexity is
hidden in the differential operators $\T_{2j}(0)$ which define the
divergence terms. \eqref{GRQ} would shed new light on these divergence
terms by replacing the coefficients $v_{2j}$ by $Q_{2j}$, and
$\T_{2j}^*(0)$ by sums of compositions of GJMS-operators.

Finally, we note that the coefficients $v_{2j}$ for $2j \ne n$ give
rise to interesting variational problems \cite{chang-fang}. In the
conformally flat case, $v_{2j}$ is proportional to $\tr (\wedge^j
\Rho)$, and the functionals $\int_M \tr(\wedge^j \Rho) vol$ were first
studied by Viaclovski in \cite{viac}. The variational nature of the
functionals $\int_M \tr (\wedge^j \Rho)$ has been clarified by Branson
and Gover in \cite{BG-var}. For a deeper study of the quantities
$v_{2j}$ see \cite{G-ext}.

The paper is organized as follows. In Section \ref{theory}, we
describe the theoretical background from \cite{juhl}. In Section
\ref{status}, we formulate the universal recursive formula in full
generality. We combine the detailed description of the algorithm
with a clear accentuation of the conjectural input. For locally
conformally flat metrics, we prove the universality of \eqref{q8-ex}
and the recursive formula for the critical $Q_{10}$. We describe a
part of the structure of the recursive formulae in terms of a
generating function $\G$. Finally, we discuss a piece of evidence
which comes from the theory of extended obstruction tensors
\cite{G-ext}. In Section \ref{structure}, we formulate a conjectural
description of the functions $N \mapsto a_I^{(N)}$ in terms of
interpolation polynomials $r_I$ which are generated by recursive
relations (Conjecture \ref{para}). All formulated structural
properties are obtained by extrapolation from numerical data
(Section \ref{app}). The general picture is described in Section
\ref{gen-struc}. Section \ref{examp} serves as an illustration. In
particular, we reproduce all coefficients in the universal recursive
formulae for $Q_{2N}$ ($N \le 5$) in terms of the values of the
polynomials $r_I$. In Section \ref{open}, we emphasize some of the
open problems raised by the approach. In the Appendix, we display
explicit versions of the universal recursive formulae for $Q_{10}$,
$Q_{12}$, $Q_{14}$ and $Q_{16}$, test the universality of these
expressions by evaluation on round spheres of any even dimension,
and list a part of the numerical data from which the conjectures
have been distilled.

The present paper combines theoretical results of \cite{juhl} with
computer experiments using Mathematica with the NCAlgebra package.
The computer allowed to enter the almost unexplored world of
$Q$-curvatures of order exceeding $8$. The transformations of a large
number of algorithms into effective programs is the work of the first
named author.

\section{The recursive structure of residue families}\label{theory}

The algorithm which generates the proposed recursive formulae for all
$Q$-curvatures rests on two central facts. One of these is the
identity
\begin{equation}\label{hol-form}
Q_n(h) = -(-1)^\f (d/d\lambda)|_0 (D_n^{res}(h;\lambda)(1))
\end{equation}
(\cite{gj-q}, \cite{juhl}) which detects the critical $Q$-curvature
$Q_n(h)$ in the linear part of the critical residue family
$D_n^{res}(h;\lambda)$. The second fact is the recursive structure
of residue families. We start by recalling the construction of
residue families $D_{2N}^{res}(h;\lambda)$ and reviewing their basic
properties \cite{juhl}. The algorithm will be described in Section
\ref{status}.

For $2N \le n$, the families $D_{2N}^{res}(h;\lambda)$, $\lambda \in
\C$ are natural one-parameter family of local operators
$$
C^\infty([0,\varepsilon) \times M) \to C^\infty(M).
$$
They are completely determined by the metric $h$. Their construction
rests on the Poincar\'e-Einstein metrics with conformal infinity $[h]$
(\cite{cartan}, \cite{FG-final}).

A Poincar\'e-Einstein metric $g$ associated to $(M,h)$ is a metric on
$(0,\varepsilon) \times M$ (for sufficiently small $\varepsilon$) of
the form
$$
g = r^{-2} (dr^2 + h_r),
$$
where $h_r$ is a one-parameter family of metrics on $M$ so that $h_0=h$ and
\begin{equation}\label{ein}
Ric(g) + n g = O(r^{n-2}).
\end{equation}
The Taylor series of $h_r$ is even in $r$ up to order $n$. More precisely,
\begin{equation}\label{GF}
h_r = h_{(0)} + r^2 h_{(2)} + \dots + r^n(h_{(n)} + \log r \bar{h}_{(n)})
+ \dots.
\end{equation}
In \eqref{GF}, the coefficients $h_{(2)}, \dots h_{(n-2)}$ and
$\tr(h_{(n)})$ are determined by $h_{(0)} = h_0 = h$. These data are
given by polynomial formulae in terms of $h$, its inverse, and
covariant derivatives of the curvature tensor. In particular, $h_{(2)}
= -\Rho$. Let
$$
v(r,\cdot) = \frac{vol(h_r)}{vol(h)} = v_0 + r^2 v_2 + \dots + r^n v_n
+ \cdots, \; v_0 = 1.
$$
Here $vol$ refers to the volume forms of the respective metrics on
$M$. The coefficients $v_{2j} \in C^\infty(M)$ ($j=0,\dots,\f$) are
given by local formulae in terms of $h$, its inverse, and the
covariant derivatives of the curvature tensor. $v_n$ is the
holographic anomaly of the asymptotic volume of the
Poincar\'{e}-Einstein metric $g$ \cite{GV}.

\begin{defn}[{\bf Residue families}]\label{RF} For $2N \le n$, let
$$
D_{2N}^{res}(h;\lambda): C^\infty([0,\varepsilon) \times M^n) \to C^\infty(M^n)
$$
be defined by
$$
D_{2N}^{res}(h;\lambda) = 2^{2N} N! \left[(-\f\!-\!\lambda\!+\!2N\!-\!1)
\cdots (-\f\!-\!\lambda\!+\!N)\right] \delta_{2N}(h;\lambda\!+\!n\!-\!2N)
$$
with
$$
\delta_{2N}(h;\lambda) = \sum_{j=0}^{N} \frac{1}{(2N\!-\!2j)!}
\left[ \T^*_{2j}(h;\lambda) v_0 + \cdots + \T^*_0(h;\lambda) v_{2j} \right]
i^* \left(\partial/\partial r\right)^{2N-2j}.
$$
Here $i^*$ restricts functions to $r=0$, and the holographic
coefficients $v_{2j}$ act as multiplication operators.
\end{defn}

The rational families $\T_{2j}(h;\lambda)$ of differential operators
on $M$ arise by solving the asymptotic eigenfunction problem for the
Poincar\'e-Einstein metric. In other words, $\T_{2j}(h;\lambda)$ is
given by
$$
\T_{2j}(h;\lambda) f = b_{2j}(h;\lambda),
$$
where
\begin{equation}\label{asymp}
u \sim \sum_{j \ge 0} r^{\lambda+2j} b_{2j}(h;\lambda), \; r \to 0
\end{equation}
describes the asymptotics of an eigenfunction $u$ so that
$$
-\Delta_g u = \lambda(n\!-\!\lambda) u
$$
and $b_0 = f$. In particular, the operators $\T_{2j}(h;0)$ describe
the asymptotics of solutions of the Dirichlet problem at infinity.
Note that the asymptotics of an eigenfunction $u$ for $\Re(\lambda) =
\frac{n}{2}$ contains a second sum with leading exponent
$n-\lambda$. This sum is suppressed in \eqref{asymp}. The renormalized
families
$$
P_{2j}(h;\lambda) = 2^{2j} j! \left(\f\!-\!\lambda\!-\!1\right)
\cdots \left(\f\!-\!\lambda\!-\!j\right) \T_{2j}(h;\lambda)
$$
are polynomial in $\lambda$. They satisfy $P_{2j}(\lambda) = \Delta^j
+ \text{LOT}$ for all $\lambda$ and
$$
P_{2j}\left(h;\f\!-\!j\right) = P_{2j}(h).
$$
Formal adjoints of $\T_{2j}(h;\lambda)$ are taken with respect to the
scalar product defined by $h$.

The family $D_{2N}^{res}(h;\lambda)$ is conformally covariant in the
following sense. The Poincar\'e-Einstein metrics of $h$ and $\hat{h} =
e^{2\varphi} h$ are related by
$$
\kappa^* \left(r^{-2}(dr^2 \!+\!h_r)\right) = r^{-2} (dr^2\!+\!\hat{h}_r),
$$
where $\kappa$ is a diffeomorphism which fixes the boundary
$r=0$. Then we have
\begin{equation}\label{covar}
D_{2N}^{res}(\hat{h};\lambda)
= e^{(\lambda-2N) \varphi} \circ D_{2N}^{res}(h;\lambda) \circ \kappa_*
\circ \left( \frac{\kappa^*(r)}{r} \right)^\lambda.
\end{equation}
For the proof of \eqref{covar} one interprets the family as a
residue of a certain meromorphic family of distributions
\cite{juhl}.

Now assume that $h$ is conformally flat. Then for
$$
\lambda \in \left\{ -\f\!+\!N, \dots, -\f\!+\!2N\!-\!1 \right\}
\cup \left\{ -\frac{n\!-\!1}{2} \right\},
$$
the family $D_{2N}^{res}(h;\lambda)$ factorizes into the product of
a lower order residue family and a GJMS-operator:
\begin{equation}\label{fact-a}
D_{2N}^{res}\left(h;-\f\!+\!2N\!-\!j\right) = P_{2j}(h)
D_{2N-2j}^{res}\left( h;-\f\!+\!2N\!-\!j \right)
\end{equation}
for $j=1,\dots,N$ and
\begin{equation}\label{fact-b}
D_{2N}^{res}\left(h;-\frac{n\!-\!1}{2}\right) =
D_{2N-2}^{res}\left(h;-\frac{n\!+\!3}{2}\right) P_{2}(dr^2\!+\!h_r).
\end{equation}
The additional factorization identities which involve higher order
GJMS-operators for $dr^2+h_r$ (see \cite{juhl}) will not be important
in the present paper. The factorization identities should be regarded
as curved versions of multiplicity one theorems in representation
theory.

For $j=N$, \eqref{fact-a} states that
$$
D_{2N}^{res}\left(h;-\f\!+\!N\right) = P_{2N}(h) i^*.
$$
In particular, the critical residue family $D_n^{res}(h;\lambda)$
specializes to the critical GJMS-operators at $\lambda=0$:
$$
D_n^{res}(h;0) = P_n(h) i^*.
$$

The factorization identities in \eqref{fact-a} and the identity
\eqref{fact-b} are of different nature. The identities in
\eqref{fact-a} actually hold true without additional assumptions on
$h$. In \cite{juhl} it is shown that this can be derived as a
consequence of the identification of $P_{2N}$ as the residue of the
scattering operator \cite{GZ}. \eqref{fact-b} is more difficult and
presently only known for general order under the assumption that $h$
is conformally flat. In that case, the identity follows from the
conformal covariance \eqref{covar} of the family, together with a
corresponding factorization in the flat case.

\section{The universal recursive formulae}\label{status}

In the present section, we formulate conjectural recursive
presentations of all $Q$-curvatures and describe their status.

\begin{conj}[{\bf Universal recursive formulae}]\label{main-c} Let
$n$ be even and assume that $2N \le n$. Then the $Q$-curvature
$Q_{2N}$ on Riemannian manifolds of dimension $n$ can be written in
the form
\begin{equation}\label{URF}
Q_{2N} = \sum_{1 \le |I| \le N-1} a^{(N)}_I  P_{2I} (Q_{2N-2|I|}) +
(-1)^{N-1} \frac{(2N\!-\!2)!!}{(2N\!-\!3)!!} i^* \bar{P}_2^{N-1}
(\bar{Q}_2)
\end{equation}
with certain rational coefficients $a_I^{(N)}$ which do not depend
on $n$. The sum in \eqref{URF} runs over all compositions
$I=(I_1,\dots,I_m)$ of integers in $[1,N-1]$ as sums of natural
numbers. For $I=(I_1,\dots,I_m)$ of length $m$ and size $|I| = I_1 +
\dots + I_m$, the operator $P_{2I}$ is defined as the composition
$P_{2I_1} \cdots P_{2I_m}$ of GJMS-operators. The coefficients
$a_I^{(N)}$ have the sign $(-1)^{|I|+m-1}$.
\end{conj}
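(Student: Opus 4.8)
The plan is to generate the presentation \eqref{URF} algorithmically from the two structural facts recalled in Section~\ref{theory}. First I would treat the critical case $2N=n$ and then recover the subcritical statement by continuation in dimension, using crucially that the coefficients $a_I^{(N)}$ are asserted to be independent of $n$. The starting point is \eqref{hol-form}, which reduces $Q_n$ to the linear term of the critical residue family applied to the constant function, $Q_n = -(-1)^{\f}\dot{D}_n^{res}(0)(1)$. Thus the whole problem is transferred to understanding the value and first $\lambda$-derivative of $D_n^{res}(\lambda)(1)$ at $\lambda=0$.

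The mechanism for producing the right-hand side is to exploit the factorizations \eqref{fact-a}--\eqref{fact-b}, which evaluate $D_{2N}^{res}(\lambda)$ at a distinguished finite set of points as products of a GJMS-operator (or the Yamabe operator $\bar{P}_2$ of $dr^2+h_r$) with a lower order residue family. Since $D_{2N}^{res}(\lambda)$ is polynomial in $\lambda$ of controlled degree, I would reconstruct it from these special values by Lagrange interpolation, writing $D_n^{res}(\lambda)$ as a $\lambda$-dependent combination of the products $P_{2I}D_{2N-2|I|}^{res}(\lambda)$ coming from \eqref{fact-a} and of the iterates built from \eqref{fact-b}. Evaluating and differentiating at $\lambda=0$, and using the specialization $D_{2M}^{res}(-\f+M)=P_{2M}i^*$ together with \eqref{q-def} to relate the critical values to the $Q_{2M}$, the interpolation weights collapse the $P_{2I}D_{2N-2|I|}^{res}$-terms into the main sum $\sum_I a_I^{(N)}P_{2I}(Q_{2N-2|I|})$, while the repeated use of \eqref{fact-b} contributes the single boundary term $i^*\bar{P}_2^{N-1}(\bar{Q}_2)$. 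Tracking the accumulated rational prefactors $2^{2j}j!(\cdots)$ from Definition~\ref{RF} and from the renormalization of $P_{2j}(\lambda)$ should yield the explicit normalization $(-1)^{N-1}(2N-2)!!/(2N-3)!!$ of that term, and the sign rule $(-1)^{|I|+m-1}$ should follow from the signs of the interpolation weights at the factorization points.

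The argument is inductive in $N$: the reduction produces lower order residue families $D_{2M}^{res}$, and to convert these into the genuine curvatures $Q_{2M}$ one must invoke \eqref{URF} for all $M<N$ \emph{in the fixed dimension $n$}. This is the principle of universality, and it is exactly where the difficulty concentrates. The algorithm only fixes the coefficients $a_I^{(N)}$ \emph{after} one assumes that the lower order formulae hold universally, so the scheme is self-referential: it pins down the numbers but does not by itself prove that the resulting expression equals $Q_{2N}$ in dimensions $>n$. This is why I expect the statement can be established rigorously only order by order (as for $Q_4$, $Q_6$ and, in the conformally flat case, $Q_8$) and must otherwise remain conjectural. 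A second, more technical obstacle is that the factorization \eqref{fact-b}, which supplies the $\bar{P}_2$-term, is presently available in general order only for conformally flat $h$; under that hypothesis the entire reduction goes through, but removing it for general metrics is precisely the open point flagged after \eqref{q8-ex}.
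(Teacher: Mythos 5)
Your proposal reproduces the paper's own algorithm essentially verbatim: reduce the critical $Q_n$ to $\dot{D}_n^{res}(0)(1)$ via \eqref{hol-form}, resolve the critical residue family through the factorization identities \eqref{fact-a}--\eqref{fact-b} and interpolation, recursively descend to lower order families, and invoke universality of the subcritical formulae to convert the $i^*\bar{P}_2^k(1)$ terms into lower order $Q$-curvatures. You also correctly isolate the two points where the argument remains conjectural --- the unproven principle of universality and the restriction of \eqref{fact-b} to conformally flat metrics --- which is exactly the status the paper assigns to the statement.
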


We emphasize that the sum in \eqref{URF} runs over compositions $I$
instead of partitions. This reflects the fact that the
GJMS-operators do not commute. Since there are $2^{N-1}$
compositions of size $N$, the sum in \eqref{URF} contains
$$
2^0 + 2^1 + \cdots + 2^{N-2} = 2^{N-1}-1
$$
terms. The operator $\bar{P}_2(h)$ denotes the Yamabe operator of
the conformal compactification $dr^2+h_r$ of the Poincar\'e-Einstein
metric of $h$ (Section \ref{theory}). $\bar{Q}_2$ is $Q_2$ for the
metric $dr^2+h_r$. In more explicit terms,
\begin{equation}\label{scalar}
\bar{Q}_2(h) = \J(dr^2\!+\!h_r) = -\frac{1}{2r} \tr (h_r^{-1} \dot{h}_r)
\end{equation}
and $\bar{P}_2(h) = \Delta_{dr^2+h_r}\!-\!(\f\!-\!1) \bar{Q}_2(h)$
with
$$
\Delta_{dr^2+h_r} = \partial^2/\partial r^2 + \frac{1}{2}
\tr(h_r^{-1} \dot{h}_r) \partial/\partial r + \Delta_{h_r}.
$$
Note that $h_{(2)} = -\Rho$ implies
\begin{equation}\label{u2}
i^* \bar{Q}_2 = Q_2.
\end{equation}

We continue with the description of the algorithm which generates
the presentations \eqref{URF}.

First of all, all formulae arise from the corresponding formulae for
critical $Q$-curvatures by applying the principle of {\em
  universality}. The conjectural status of the formulae \eqref{URF} is
partly due to the unproven applicability of this principle.

As a preparation for the definition of the algorithm, we observe
some consequences of the factorization identities for residue
families. The family $D_{2N}^{res}(h;\lambda)$ is polynomial of
degree $N$. The $N+1$ identities \eqref{fact-a} and \eqref{fact-b}
imply that $D_{2N}^{res}(h;\lambda)$ can be written as a linear
combination of the right-hand sides of these identities. The lower
order residue families which appear in this presentation, in turn,
satisfy corresponding systems of factorization identities. These
allow to write any of these families as a linear combination of the
corresponding right-hand sides of the factorization relations they
satisfy. The continuation of that process leads to a formula for
$D_{2N}^{res}(h;\lambda)$ as a linear combination of compositions of
the GJMS-operators
$$
P_{2N}(h), \dots, P_2(h)
$$
and the Yamabe operator $\bar{P}_2(h) = P_{2}(dr^2\!+\!h_r)$. The
second reason for the conjectural status of \eqref{URF} is that the
full system of factorization identities is not yet available for
general metrics (see the comments at the end of Section
\ref{theory}).

We apply the above method to the critical residue family
$D_n^{res}(h;\lambda)$ and combine the resulting formula with
\eqref{hol-form}. This yields a formula for $Q_n(h)$ as a linear
combination of compositions of the GJMS-operators
$$
P_{n-2}(h), \dots, P_2(h)
$$
and the Yamabe operator $\bar{P}_2(h) = P_{2}(dr^2+h_r)$ (acting on
$u=1$). That formula contains compositions of GJMS-operators with
powers of $\bar{P}_2(h)$ up to $\f$.

In the next step, we replace all quantities
$$
i^* \bar{P}_2^k(h)(1) \quad \mbox{for $k=1,\dots, n/2-1$}
$$
by subcritical GJMS-operators and subcritical $Q$-curvatures
$Q_{2k}$. For that purpose, we apply similar formulae for the
subcritical $Q$-curvatures. Here the principle of universality
becomes crucial. In fact, by {\em assuming} the universality of the
respective formulae for $Q_2, \dots, Q_{n-2}$, we regard these as
formulae for $i^* \bar{P}_2^k (1)$ ($1 \le k \le \f-1$), and plug
them into the formula for $Q_n$. This finishes the algorithm.

The description shows that, for conformally flat metrics, the
conjectural status of the presentations is {\em only} due to the
principle of universality.

For the convenience of the reader, we illustrate the algorithm in
two special cases.

We start with a proof of \eqref{q4-ex} in dimension $n=4$. We
consider the critical family $D_4^{res}(h;\lambda)$. We write this
family in the form
$$
A \lambda^2 + B \lambda + C,
$$
and determine the operator coefficients by using the factorization
identities
\begin{align*}
D_4^{res}(h;0) & = P_4(h) i^*, \\
D_4^{res}(h;1) & = P_2(h) D_2^{res}(h;1), \\
D_4^{res}\left(h;-\frac{3}{2}\right) & =
D_2^{res}\left(h;-\frac{7}{2}\right) P_2(dr^2\!+\!h_r).
\end{align*}
The first identity implies $C=P_4(h)i^*$. The remaining two
relations yield
$$
\begin{pmatrix} A \\[1mm] B  \end{pmatrix} =
\frac{1}{15} \begin{pmatrix} 4 & 6 \\[1mm] -4 & 9 \end{pmatrix}
\begin{pmatrix}
D_2^{res}(h;-\frac{7}{2}) P_2(dr^2\!+\!h_r) \\[1mm]
P_2(h) D_2^{res}(h;1) \end{pmatrix}.
$$
Now by the factorization identities for $D_2^{res}(h;\lambda)$,
\begin{align*}
D_2^{res}\left(h;-\frac{7}{2}\right)
& = 5 i^* P_2(dr^2 \!+\! h_r) - 4 P_2(h) i^*, \\
D_2^{res}(h;1) & = -4 i^* P_2(dr^2 \!+\! h_r) + 5 P_2(h) i^*.
\end{align*}
Thus, we find
\begin{equation}\label{n4}
A = 2 P_2^2 i^* - \frac{8}{3} P_2 i^* \bar{P}_2 + \frac{4}{3}
i^*\bar{P}_2^2 \quad \mbox{and} \quad B = 3 P_2^2 i^* - \frac{4}{3}
P_2 i^* \bar{P}_2 - \frac{4}{3} i^* \bar{P}_2^2.
\end{equation}
Now the formula for $B$ in \eqref{n4}, together with
\eqref{hol-form}, implies
\begin{align*}
Q_4 = - B(1) & = - 3 P_2^2 (1) + \frac{4}{3} P_2 (i^* \bar{P}_2(1))
+ \frac{4}{3} i^* \bar{P}_2^2 (1) \\ & = 3 P_2(Q_2) - 2 P_2 (i^*
\bar{Q}_2) - 2 i^* \bar{P}_2 (\bar{Q}_2).
\end{align*}
The last equality is a consequence of
$$
P_2(1) = - Q_2 \quad \mbox{and} \quad \bar{P}_2(1) = - \frac{3}{2}
\bar{Q}_2
$$
(see \eqref{q-def}). But using $i^* \bar{Q}_2 = Q_2$ (see
\eqref{u2}), we find
$$
Q_4 = P_2(Q_2) - 2 i^* \bar{P}_2(\bar{Q}_2).
$$
This is \eqref{q4-ex}. Although, the above derivation is only valid
in dimension $n=4$, the final formula for $Q_4$ is valid in all
dimensions (see the discussion on page \pageref{calcul}). We also
note that we simplified the contribution
$$
P_2 (i^* \bar{P}_2(1))
$$
by using $i^*\bar{Q}_2 = Q_2$ in dimension $n=4$ (see \eqref{u2}).
Since the latter identity can be regarded as a version of the
universal formula for $Q_2$, that argument is the simplest special
case of the application of universality of subcritical
$Q$-curvatures in the algorithm.

Similarly, the algorithm yields the recursive formula \eqref{q6-ex}
for the critical $Q$-curvature $Q_6$ for conformally flat metrics
$h$. The derivation makes use of the relations $i^* \bar{Q}_2 = Q_2$
and \eqref{q4-ex} in dimension $n=6$. Again, \eqref{q6-ex} holds
true for all metrics and in all dimensions $n \ge 6$. For detailed
proofs of these results we refer to \cite{juhl}. A calculation using
\eqref{q6-ex} shows that $\J^3$ contributes to $Q_6$ with the
coefficient $(\f-1)(\f+1)$.

Starting with $Q_8$, the theory is less complete. The following
detailed description of this case will also point to the open
problems. In this case, we use the universality of $i^*\bar{Q}_2 =
Q_2$, \eqref{q4-ex} and \eqref{q6-ex} to deduce the formula
\eqref{q8-ex} for $Q_8$ in dimension $n=8$ for conformally flat $h$.
The starting point is the identity
\begin{equation}\label{start-8}
-\dot{D}^{res}_8(h,0)(1) = Q_8(h).
\end{equation}
The critical family $D_8^{res}(h;\lambda)$ satisfies the
factorization identities
\begin{align*}
D_8^{res}(h;0) & = P_8(h) i^*, \\
D_8^{res}(h;1) & = P_6(h) D_2^{res}(h;1), \\
D_8^{res}(h;2) & = P_4(h) D_4^{res}(h;2), \\
D_8^{res}(h;3) & = P_2(h) D_6^{res}(h;3),
\end{align*}
and
\begin{equation}\label{last-8}
D_8^{res}\left(h;-\frac{7}{2}\right) =
D_6^{res}\left(h;-\frac{11}{2}\right) \bar{P}_2(h).
\end{equation}
In view of $P_8(h)(1) = 0$, it follows that $Q_8(h)$ can be written
as a linear combination of the four terms
$$
P_6(h) D_2^{res}(h;1)(1), \; P_4(h) D_4^{res}(h;2)(1), \; P_2(h)
D_6^{res}(h;3)(1)
$$
and $D_6^{res}(h;-\frac{11}{2}) \bar{P}_2(h)(1)$. The families
$D_{2j}^{res}(h;\lambda)$ ($j=1,2,3$), in turn, can be written as
linear combinations of compositions of respective lower order
GJMS-operators and residue families. In order to obtain these
presentations, we use the corresponding systems of factorization
identities which are satisfied by these families. The continuation
of the process leads to a presentation of $Q_8(h)$ as a linear
combination of compositions of GJMS-operators with powers of
$\bar{P}_2(h)$ (acting on $1$). More precisely, the contributions
which involve a non-trivial power of $\bar{P}_2$ are of the form
$$
* \, (i^* \bar{P}_2^k(h)(1)) \quad \mbox{for $k=1,\dots,4$}.
$$
Now we apply the universality of $i^*\bar{Q}_2 = Q_2$, \eqref{q4-ex}
and \eqref{q6-ex}. In particular, in dimension $n=8$ we regard these
formulae as expressions for
$$
i^* \bar{P}_2(h)(1), \quad i^* \bar{P}_2^2(h)(1) \quad \mbox{and}
\quad i^* \bar{P}_2^3 (h)(1)
$$
by using $\bar{P}_2 (h)(1) = -\frac{7}{2} \bar{Q}_2(h)$. These
calculations prove

\begin{propo}\label{flat-crit-8} On locally conformally flat Riemannian
manifolds of dimension $8$, $Q_8$ is given by \eqref{q8-ex}.
\end{propo}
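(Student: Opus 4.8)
The plan is to execute the algorithm described immediately above the statement, which is the same linear-algebra procedure already illustrated for $Q_4$ and $Q_6$, now carried out explicitly for $N=4$ in the critical dimension $n=8$. First I would write the critical residue family as a degree-$4$ polynomial in $\lambda$,
$$
D_8^{res}(h;\lambda) = A\lambda^4 + B\lambda^3 + C\lambda^2 + E\lambda + F,
$$
and determine the five operator coefficients from the five factorization identities listed before the statement (the four identities \eqref{fact-a} at $\lambda = 0,1,2,3$ together with \eqref{fact-b} at $\lambda=-\frac{7}{2}$). Since $h$ is assumed locally conformally flat, these identities are available by the discussion at the end of Section \ref{theory}. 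Solving the resulting $5\times 5$ Vandermonde-type system expresses each coefficient as an explicit rational linear combination of the right-hand sides
$$
P_8(h)i^*,\; P_6(h)D_2^{res}(h;1),\; P_4(h)D_4^{res}(h;2),\; P_2(h)D_6^{res}(h;3),\; D_6^{res}\!\left(h;-\tfrac{11}{2}\right)\bar{P}_2(h).
$$

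Next I would recursively expand the subcritical families $D_2^{res}$, $D_4^{res}$, $D_6^{res}$ appearing on the right using their own factorization identities, exactly as in the $Q_4$ computation where $D_2^{res}(h;-\frac{7}{2})$ and $D_2^{res}(h;1)$ were rewritten in terms of $i^*\bar{P}_2$ and $P_2 i^*$. Iterating this substitution turns every term into a composition of the GJMS-operators $P_2,P_4,P_6$ with powers of $\bar{P}_2$, all applied to the constant function $1$. By \eqref{start-8} and \eqref{hol-form}, the critical $Q_8$ is recovered as $Q_8 = -\dot{D}_8^{res}(h;0)(1) = -E(1)$, so only the coefficient $E$ of the linear term is needed; using $P_8(h)(1)=0$ (i.e.\ $F(1)=0$ in the critical dimension) eliminates the top GJMS-operator.

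The final step is the elimination of the powers of $\bar{P}_2$ in favour of subcritical $Q$-curvatures via the \emph{principle of universality}. Concretely, I would invoke \eqref{u2}, \eqref{q4-ex}, and \eqref{q6-ex}, read in dimension $n=8$ as expressions for $i^*\bar{P}_2^k(h)(1)$ for $k=1,2,3$, using $\bar{P}_2(h)(1) = -\frac{7}{2}\bar{Q}_2(h)$ together with $P_{2j}(h)(1) = (-1)^j(\f-j)Q_{2j}(h)$ from \eqref{q-def}. Substituting these into the expression for $-E(1)$, all intermediate $\bar{P}_2$-contributions of order $k\le 3$ collapse into the GJMS-operators acting on $Q_2,Q_4,Q_6$, leaving only the single residual term proportional to $i^*\bar{P}_2^3(\bar{Q}_2)$, which matches the last term of \eqref{q8-ex}. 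Matching the rational coefficients then yields precisely \eqref{q8-ex}.

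The main obstacle I anticipate is purely computational bookkeeping rather than conceptual: the nested recursive expansion of $D_6^{res}$ and $D_4^{res}$ generates a large number of non-commuting compositions of GJMS-operators, and because the $P_{2I}$ do not commute one must track the order of composition carefully (this is why \eqref{URF} sums over compositions, not partitions). Keeping the rational coefficients correct through the repeated substitutions — and verifying that the universality substitutions for $i^*\bar{P}_2^k(1)$ are applied consistently across all intermediate terms — is where errors are most likely to enter, which is exactly the point at which the computer-algebra assistance mentioned in the introduction becomes essential.
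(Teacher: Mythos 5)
Your proposal follows essentially the same route as the paper: starting from $Q_8=-\dot{D}_8^{res}(h;0)(1)$, determining the degree-$4$ polynomial $D_8^{res}(h;\lambda)$ from the five factorization identities (with $P_8(1)=0$ removing the top term), recursively expanding the subcritical residue families, and finally eliminating $i^*\bar{P}_2^k(1)$ for $k\le 3$ via the universality of \eqref{u2}, \eqref{q4-ex} and \eqref{q6-ex} in dimension $8$. This is precisely the paper's argument, including the observation that conformal flatness is needed only to justify the $\bar{P}_2$-type factorizations.
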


It remains open whether, in dimension $n=8$, the same formula yields
$Q_8$ for {\em general} metrics. In the above proof, the restriction
to conformally flat metrics is only due to the unproven validity of
the factorization identity \eqref{last-8} for general metrics. We
expect that the restriction can be removed.

However, more can be said in the locally conformally flat case. In
this case, Proposition \ref{flat-8} yields the universality of
\eqref{q8-ex}. Before we prove this result, we describe a
consequence.

The validity of \eqref{q8-ex} in dimension $n=10$ (for locally
conformally flat metrics) is the only new ingredient which is
required for a proof that (for such a metric) $Q_{10}$ in dimension
$n=10$ coincides with the formula generated by the algorithm. In
fact, in that proof, \eqref{q8-ex} is used as a formula for $i^*
\bar{P}_2^4(1)$. The universality of \eqref{q4-ex} and \eqref{q6-ex}
has been used already in the above constructions. In the present
argument, these formulae are used in dimension $n=10$ as formulae
for the respective quantities $i^* \bar{P}_2^2(1)$ and $i^*
\bar{P}_2^3(1)$. The resulting formula for $Q_{10}$ is displayed in
Section \ref{computer}.

The argument assumes conformal flatness since some of the
factorization identities for $D_{8}^{res}(\lambda)$ and
$D_{10}^{res}(\lambda)$ which enter into the algorithm are only
known for such metrics. The problematic identities are those which
contain the factor $\bar{P}_2$ (see \eqref{fact-b} and the comments
at the end of Section \ref{theory}).

Proving universality of \eqref{q8-ex} through comparison with the
formula for $Q_8$ displayed in \cite{G-P} seems to be a challenging
task even for conformally flat metrics. Concerning a comparison of
both formula for $Q_8$ we only note that a calculation using
\eqref{q8-ex} shows that $\J^4$ contributes to $Q_8$ with the
coefficient $(\f-2)\f(\f+2)$. This observation fits with \cite{G-P}.

Next, we describe a more conceptual approach towards universality.
It rests on the systematic elaboration of the relations between the
quantities
$$
Q_{2N} \quad \mbox{and} \quad
\dot{D}^{res}_{2N}\left(-\frac{n}{2}+N\right)(1).
$$

We first describe the method by proving the universality of the
recursive formula
\begin{equation}\label{q4-univ}
Q_4 = P_2(Q_2) - 2i^* \bar{P}_2 (\bar{Q}_2)
\end{equation}
(for general metrics). For even $n \ge 8$, the polynomial
$$
Q_4^{res}(\lambda) = - D_4^{res}(\lambda)(1)
$$
can be characterized in {\em two} different ways. On the one hand,
for all even $n \ge 4$, this quadratic polynomial satisfies the
system
\begin{equation}\label{syst-1}
\begin{split}
Q_4^{res}\left(-\f\!+\!2\right) & = - P_4(1) = -\left(\f\!-\!2\right)Q_4 \\
Q_4^{res}\left(-\f\!+\!3\right) & = - P_2
D_2^{res}\left(-\f\!+\!3\right)(1)
\end{split}
\end{equation}
and the relation
\begin{equation}\label{syst-2}
Q_4^{res}\left(-\frac{n\!-\!1}{2}\right) = -
D_2^{res}\left(-\frac{n\!+\!3}{2}\right) \bar{P}_2(1).
\end{equation}
On the other hand, for even $n \ge 8$, the polynomial
$Q_4^{res}(\lambda)$ is characterized by \eqref{syst-1} and
\begin{equation}\label{syst-3}
Q_4^{res}(0) = 0.
\end{equation}
For $n=4$ and $n=6$, the condition \eqref{syst-3} is contained in
the conditions of \eqref{syst-1}. In particular, in the critical
case, these conditions do {\em not} suffice to determine the
polynomial.

For even $n \ge 4$, \eqref{syst-1} and \eqref{syst-2} imply that
\begin{equation}\label{alph}
\dot{Q}_4^{res}\left(-\f\!+\!2\right) = \frac{1}{3}
\frac{n\!-\!4}{2} Q_4 + \frac{5n\!-\!14}{6} P_2(Q_2)
-\frac{2(n\!-\!1)}{3} i^* \bar{P}_2(\bar{Q}_2).
\end{equation}
For $n=4$, this relation yields
$$
\dot{Q}_4(0) = P_2(Q_2) - 2 i^* \bar{P}_2(\bar{Q}_2).
$$
It leads to \eqref{q4-univ}, when combined with $\dot{Q}^{res}_4(0)
= Q_4$. This method has been used above. On the other hand, for even
$n \ge 8$, \eqref{syst-1} and \eqref{syst-3} imply
\begin{equation}\label{bet}
\dot{Q}_4^{res}\left(-\f\!+\!2\right) = Q_4 + \left(\f\!-\!2\right)
(Q_4 + P_2(Q_2)).
\end{equation}
Subtracting \eqref{alph} and \eqref{bet} gives
$$
0 = \frac{n\!-\!1}{3} \left(Q_4 - P_2(Q_2) - 2 i^*
\bar{P}_2(\bar{Q}_2)\right).
$$
This proves the universality of \eqref{q4-univ}. The cases $n=4,6$
are covered by analytic continuation in $n$. The argument reverses
an argument in \cite{juhl}, where \eqref{bet} was derived from
\eqref{q4-univ}.

A similar argument can be applied for $Q_6$. One formula for the
polynomial $Q_6^{res}(\lambda) = D_6^{res}(\lambda)(1)$ of degree
$3$ follows from the four factorization identities \eqref{fact-a}
and \eqref{fact-b} (for $N=3$). The calculation extends the
algorithm described above. It uses the universality of
\eqref{q4-univ}. On the other hand, for even $n \ge 12$, Lagrange's
interpolation formula yields a second formula for
$Q_6^{res}(\lambda)$ by using \eqref{fact-a} (for $N=3$) and
$$
Q_6^{res}(0)=0.
$$
For $n=6,8,10$, the latter condition is contained in the system
\eqref{fact-a} (for $N=3$).

The comparison of both resulting formulae for
$\dot{Q}_6^{res}(-\f+3)$ yields
$$
0 = \frac{n\!-\!1}{5} \left(Q_6 - \frac{2}{3} P_2(Q_4) - \frac{2}{3}
P_4(Q_2) + \frac{5}{3} P_2^2 (Q_2)  - \frac{8}{3} i^*
\bar{P}_2^2(\bar{Q}_2)\right).
$$
This proves the universality of \eqref{q6-ex}. The cases $n =6,8,10$
are covered by analytic continuation in $n$. For the details (of the
reversed argument) see \cite{juhl}, Theorems 6.11.7 -- 6.11.8.

Similarly, we compare two formulae for
$$
\dot{Q}_8^{res}\left(-\f\!+\!4\right),
$$
where $Q_8^{res}(\lambda) = - D_8^{res}(\lambda)(1)$. Under the
assumption $Q_8^{res}(0)=0$, we find
\begin{multline*}
0 = \frac{n\!-\!1}{7} \Big[Q_8 - \frac{3}{5} P_2(Q_6) + 4 P_2^2(Q_4)
- \frac{17}{5} P_4(Q_4) + \frac{22}{5} P_2^3(Q_2) \\ - \frac{8}{5}
P_2 P_4 (Q_2)  - \frac{28}{5} P_4 P_2 (Q_2) + \frac{9}{5} P_6 (Q_2)
+ \frac{16}{5} i^* \bar{P}_2^3(\bar{Q}_2)\Big].
\end{multline*}
We suppress the details of the calculations. The vanishing of the
quantity in brackets is equivalent to \eqref{q8-ex}.

The quantity $Q_8^{res}(h;0) \in C^\infty(M)$ is a scalar conformal
invariant. In fact, the conformal transformation law \eqref{covar}
implies
$$
e^{2N \varphi} D_{2N}^{res}(\hat{h};0)(1) = D_{2N}^{res}(h;0)(1), \;
\hat{h} = e^{2\varphi}h,
$$
i.e.,
\begin{equation}\label{ci}
e^{2N\varphi} Q_{2N}^{res}(\hat{h};0) = Q_{2N}^{res}(h;0)
\end{equation}
for $Q^{res}_{2N}(h;\lambda) = -(-1)^N D^{res}_{2N}(h;\lambda)$. In
particular,
\begin{equation}\label{ci-8}
e^{8\varphi} Q_8^{res}(\hat{h};0) = Q_8^{res}(h;0).
\end{equation}
By \cite{FG-final}, Section 9 there are no such non-trivial
invariants on locally conformally flat manifolds of dimension $>8$.
In other words, for locally conformally flat metrics $h$, the
condition $Q_8^{res}(h;0)=0$ is satisfied in dimension $>8$. Thus,
we have proved

\begin{propo}\label{flat-8} On locally conformally flat manifolds
$(M,h)$ of dimension $n > 8$, the recursive formula \eqref{q8-ex}
for $Q_8(h)$ holds true.
\end{propo}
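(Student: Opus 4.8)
The plan is to reconstruct the degree-four polynomial $Q_8^{res}(\lambda) = -D_8^{res}(\lambda)(1)$ in two different ways and to equate the two resulting expressions for its derivative $\dot{Q}_8^{res}(-\f+4)$. Both reconstructions share the four interpolation values furnished by the factorization identities \eqref{fact-a} (for $N=4$, $j=1,\dots,4$), which hold for all metrics and express $Q_8^{res}$ at the nodes $\lambda = -\f+8-j$ through $P_8(1) = (\f-4)Q_8$ and compositions of lower GJMS-operators with the subcritical families $D_2^{res}, D_4^{res}, D_6^{res}$. Since a quartic is fixed by five values, each reconstruction needs one further datum. The first uses the remaining factorization identity \eqref{last-8} (that is, \eqref{fact-b} for $N=4$), valid for conformally flat $h$, at $\lambda=-\frac{n-1}{2}$; the second uses instead the single scalar relation $Q_8^{res}(0)=0$.

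The conceptual heart of the argument is to secure this last relation. Specialising the conformal covariance \eqref{covar} at $\lambda=0$ gives $e^{8\varphi}Q_8^{res}(\hat{h};0)=Q_8^{res}(h;0)$, i.e. \eqref{ci-8}, so $Q_8^{res}(h;0)$ is a scalar conformal invariant of order $8$. By the classification in \cite{FG-final}, Section 9, there is no nontrivial scalar conformal invariant of order $8$ on locally conformally flat manifolds of dimension $>8$; hence $Q_8^{res}(h;0)$ vanishes identically in that range, which is exactly what the second reconstruction requires.

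With both fifth data in hand I would carry out the two Lagrange interpolations, differentiate at $\lambda=-\f+4$, and in each expression eliminate every quantity $i^*\bar{P}_2^k(1)$ ($1\le k\le 3$) by invoking the already-established universality of \eqref{u2}, \eqref{q4-ex} and \eqref{q6-ex} in the ambient dimension $n$ (this is the recursive, ``universal'' step of the algorithm, applied in degree $N=4$). Equating the two formulas for $\dot{Q}_8^{res}(-\f+4)$ then leaves, after cancellation, a nonzero multiple $\tfrac{n-1}{7}$ of the bracketed combination whose vanishing is precisely \eqref{q8-ex}. The low dimensions $n=10,12,14$, where the node $\lambda=0$ collides with a factorization node and so furnishes no independent condition, are recovered by analytic continuation in $n$ from the generic range $n\ge 16$.

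I expect the genuine difficulty to be organisational rather than conceptual. The interpolation in the first step forces one to resolve the complete systems of factorization identities for $D_2^{res}, D_4^{res}$ and $D_6^{res}$ and to substitute the lower-order universal formulae; this is a long but strictly linear computation, and the real content is the check that its output reproduces the exact rational coefficients of \eqref{q8-ex}. Everything of substance beyond this bookkeeping is concentrated in the conformal invariance of $Q_8^{res}(0)$ together with the dimensional vanishing imported from \cite{FG-final}.
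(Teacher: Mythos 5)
Your proposal is correct and follows essentially the same route as the paper: two characterizations of the quartic $Q_8^{res}(\lambda)$ — one via the full system of factorization identities including \eqref{last-8}, the other via the four identities \eqref{fact-a} together with $Q_8^{res}(0)=0$, the latter secured exactly as in the paper by the conformal invariance \eqref{ci-8} and the Fefferman--Graham classification in dimension $>8$ — followed by comparison of the two expressions for $\dot{Q}_8^{res}(-\f+4)$ and analytic continuation to cover $n=10,12,14$. No substantive differences to report.
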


An alternative method is the following. We recursively determine
$Q_8^{res}(h;\lambda)$ by factorization identities at
$$
\lambda \in \left \{ -\f\!+\!4, -\f\!+\!5, -\f\!+\!6, -\f\!+\!7
\right \} \cup \left\{ -\frac{n\!-\!1}{2} \right\}
$$
(as described by the algorithm) and evaluate the result at
$\lambda=0$. For even $n \ge 16$, the condition $Q_8^{res}(h;0)=0$
is equivalent to the universal recursive formula. Again, the cases
of even $n$ such that $8 \le n \le 14$ are covered by continuation.

As described above, Proposition \ref{flat-8} has the following
consequence.

\begin{corr}\label{rec-q10} On locally conformally flat Riemannian
manifolds of dimension $10$, the critical $Q$-curvature $Q_{10}$ is
given by the formula displayed in Section \ref{computer}.
\end{corr}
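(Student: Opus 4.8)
The plan is to run the algorithm of Section \ref{status} for $N=5$ in the critical dimension $n=10$, using the holographic detection of the critical $Q$-curvature as the starting point. By \eqref{hol-form} in the critical dimension (where $\f=5$, so $-(-1)^\f=1$), one has $Q_{10}(h) = \dot{D}_{10}^{res}(h;0)(1)$, so it suffices to determine the degree-$5$ polynomial $\lambda \mapsto D_{10}^{res}(h;\lambda)$ and differentiate at $\lambda=0$. First I would pin this polynomial down by its six factorization values. From \eqref{fact-a} (with $N=5$, $\f=5$, evaluated at $\lambda = 5-j$) we obtain
$$
D_{10}^{res}(h;0) = P_{10}(h)i^*, \qquad D_{10}^{res}(h;j) = P_{2j}(h)\,D_{10-2j}^{res}(h;j), \quad j=1,2,3,4,
$$
and from \eqref{fact-b} (with $n=10$) the additional relation
$$
D_{10}^{res}(h;-9/2) = D_8^{res}(h;-13/2)\,\bar{P}_2(h).
$$
These six conditions determine the six coefficients of $D_{10}^{res}(h;\lambda)$. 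Since $D_{10}^{res}(h;0)(1)=P_{10}(h)(1)=0$ by the critical case of \eqref{q-def}, the interpolation node at $\lambda=0$ contributes nothing to the derivative $\dot{D}_{10}^{res}(h;0)(1)$, leaving $Q_{10}$ as an explicit combination of the five remaining right-hand sides (at $\lambda=1,2,3,4,-9/2$) applied to $1$.

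Next I would unfold the lower-order residue families $D_8^{res}, D_6^{res}, D_4^{res}, D_2^{res}$ appearing above by iterating their own factorization systems \eqref{fact-a}--\eqref{fact-b}, exactly as in the derivations of \eqref{q4-ex}, \eqref{q6-ex} and \eqref{q8-ex}. Each step is linear algebra (Lagrange interpolation of the operator-valued polynomial family from its factorization values), and the continuation of the process expresses $D_{10}^{res}(h;\lambda)$, and hence $Q_{10}(h)$, as a linear combination of compositions of the GJMS-operators $P_2,\dots,P_8$ applied to $1$, together with the quantities $i^*\bar{P}_2^k(h)(1)$ for $k=1,\dots,5$, where $\bar{P}_2 = P_2(dr^2+h_r)$.

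The decisive step is to eliminate the subcritical powers of $\bar{P}_2$. Using $\bar{P}_2(h)(1) = -\tfrac{9}{2}\bar{Q}_2(h)$ (the case $n=10$ of \eqref{q-def}), I would read the universal subcritical formulae as expressions for $i^*\bar{P}_2^k(h)(1)$: the identity $i^*\bar{Q}_2=Q_2$ of \eqref{u2} handles $k=1$, while \eqref{q4-ex}, \eqref{q6-ex} and \eqref{q8-ex} handle $k=2,3,4$ respectively. After substitution only the irreducible power $i^*\bar{P}_2^5(h)(1)$ survives, and it assembles into the term proportional to $i^*\bar{P}_2^4(\bar{Q}_2)$, yielding the form \eqref{URF} for $N=5$, i.e. the formula displayed in Section \ref{computer}. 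What legitimizes the argument at $n=10$ is that \eqref{q4-ex} and \eqref{q6-ex} are already known to be universal, and \eqref{q8-ex} holds in dimension $n=10$ for locally conformally flat metrics by Proposition \ref{flat-8}; this is the sole new input, exactly as asserted.

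The main obstacle is twofold. Conceptually, conformal flatness cannot be dropped here, because the \eqref{fact-b}-type factorizations for $D_8^{res}$ and $D_{10}^{res}$ — the ones carrying the factor $\bar{P}_2$ — are presently available only for conformally flat metrics (see the comments closing Section \ref{theory}); this is precisely why Proposition \ref{flat-8}, rather than a general-metric statement, must be invoked to justify the replacement of $i^*\bar{P}_2^4(1)$. Practically, the real difficulty is the bookkeeping: the number of compositions and the size of the interpolation systems grow quickly with $N$, so the elimination has to be organized carefully, which is where the computer algebra of the present paper enters. Everything else is routine once the factorization data and the universal subcritical formulae are in hand.
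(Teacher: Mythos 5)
Your proposal follows the paper's own route exactly: start from $Q_{10}=\dot{D}_{10}^{res}(h;0)(1)$, pin down the degree-$5$ family by its six factorization values, unfold the lower-order residue families, and eliminate $i^*\bar{P}_2^{k}(1)$ for $k\le 4$ via \eqref{u2}, \eqref{q4-ex}, \eqref{q6-ex} and --- this being the sole new input, supplied by Proposition \ref{flat-8} --- \eqref{q8-ex} in dimension $10$, with conformal flatness needed only for the \eqref{fact-b}-type identities carrying the factor $\bar{P}_2$. One small correction: at $\lambda=j$ the identity \eqref{fact-a} reads $D_{10}^{res}(h;j)=P_{10-2j}(h)\,D_{2j}^{res}(h;j)$ (compare the displayed list for $n=8$), not $P_{2j}(h)\,D_{10-2j}^{res}(h;j)$ as you wrote; you have paired the interpolation nodes with the wrong factors, although the set of six conditions you invoke is the correct one and the argument is otherwise the paper's.
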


We continue with a number of supplementary comments on Conjecture
\ref{main-c}.

Alternatively, \eqref{URF} can be viewed as a formula for the
function
\begin{equation}\label{holo-term}
i^* \bar{P}_2^{N-1}(\bar{Q}_2) \in C^\infty(M)
\end{equation}
which is associated to a Poincar\'e-Einstein metric on the space
$(0,\varepsilon) \times M$. From that point of view, \eqref{URF}
states that the restriction of the function
$\bar{P}_2^{N-1}(\bar{Q}_2)$ ($N \ge 2$) to $M$ can be expressed in
terms of boundary data:
\begin{equation}\label{bulk-bound}
(-1)^N \frac{(2N\!-\!2)!!}{(2N\!-\!3)!!} i^* \bar{P}_2^{N-1}
(\bar{Q}_2) = \sum_{j=0}^{N-1} \P^{(N)}_{2j} (Q_{2N-2j}),
\end{equation}
where
\begin{equation}
\P^{(N)}_{2j} = \sum_{|I| = j} a^{(N)}_I P_{2I}.
\end{equation}
Here we use the convention that $\P^{(N)}_0 = -1$. The identity $i^*
\bar{Q}_2 = Q_2$ should be regarded as the special case $N=1$ of
these relations. The differential operators $\P_{2j}^{(N)}$ are of
the form
\begin{equation}\label{p-formula}
\alpha_j^{(N)} \Delta^j + \text{LOT}
\end{equation}
with
\begin{equation}\label{ap}
\alpha_j^{(N)} = \sum_{|I|=j} a^{(N)}_I.
\end{equation}
For the flat metric, the lower order terms in \eqref{p-formula}
vanish. In Table \ref{co}, we display the coefficients
$\alpha_j^{(N)}$ for $N \le 10$. An inspection suggests that
\begin{equation}\label{aston}
\alpha_j^{(N)} = \beta_j^{(N)},
\end{equation}
where
\begin{equation}\label{beta}
\beta_j^{(N)} \st (-1)^{j-1} {N\!-\!1 \choose j} \frac{(2j\!-\!1)!!
(2N\!-\!2j\!-\!3)!!}{(2N\!-\!3)!!}.
\end{equation}
The relations \eqref{aston} would imply the {\em symmetry relations}
\begin{equation}\label{symm}
\alpha_j^{(N)} = (-1)^{N-1} \alpha_{N-1-j}^{(N)}.
\end{equation}
These are clearly visible in Table \ref{co}. The numbers
$\beta_j^{(N)}$ have a simple generating function. Let
\begin{equation}\label{gen-func}
\G(z,w) = (1-z)^{-\frac{1}{2}} (1-w)^{-\frac{1}{2}}.
\end{equation}
Then
\begin{equation}\label{gen}
\G(z,w) = \sum_{0 \le j \le N-1} \beta_j^{(N)}
\frac{(2N\!-\!3)!!}{(2N\!-\!2)!!} (-1)^{j-1} z^j w^{N-1-j}.
\end{equation}
In fact, \eqref{beta} is equivalent to
$$
\beta_j^{(N)} = (-1)^{j-1} \frac{(2N\!-\!2)!!}{j!(N\!-\!1\!-\!j)!}
\frac{(\frac{1}{2})_j(\frac{1}{2})_{N-1-j}}{(2N\!-\!3)!!},
$$
where $(a)_n = a(a\!+\!1)\dots(a\!+\!n\!-\!1)$. But using
$$
(1\!-\!z)^{-\frac{1}{2}} = \sum_{n \ge 0} \left(\frac{1}{2}\right)_n
\frac{z^n}{n!}, \; |z| < 1,
$$
we find that the coefficient of $z^j w^{N-1-j}$ in $\G(z,w)$ is
$$
\frac{(\frac{1}{2})_j(\frac{1}{2})_{N-1-j}}{j!(N\!-\!1\!-\!j)!}.
$$
This proves \eqref{gen}. It follows that the conjectural relations
\eqref{aston} can be summarized in form of the identity
\begin{equation}\label{gen-f}
\G(z,w) = \sum_{0 \le j \le N-1}
\alpha_j^{(N)} \frac{(2N\!-\!3)!!}{(2N\!-\!2)!!} (-1)^{j-1} z^j
w^{N-1-j}
\end{equation}
of generating functions. We do not attempt to prove this identity,
but note only that it is compatible with \eqref{bulk-bound} and the
well-known fact that
\begin{equation*}\label{Q-top}
Q_{2N} = (-1)^{N-1} \Delta^{N-1}(\J),
\end{equation*}
up to terms with fewer derivatives (see \cite{comp}). Indeed, the
assertion that $\Delta^{N-1}(\J)$ contributes on both sides of
\eqref{bulk-bound} with the same weight is equivalent to the
relation
\begin{equation*}\label{alt-sum}
\sum_{j=0}^{N-1} (-1)^{j-1} \alpha_j^{(N)} = \frac{(2N\!-\!2)!!}{(2N\!-\!3)!!}.
\end{equation*}
But this identity follows from the restriction of \eqref{gen-f} to
$z=w$ by comparing the coefficients of $z^{N-1}$.

In the conformally flat case, the Taylor series of $h_r$ terminates
at the third term. More precisely,
\begin{equation}\label{terminate}
h_r = \left(1\!-\!\frac{r^2}{2} \Rho\right)^2
\end{equation}
(\cite{FG-final}, \cite{juhl}, \cite{SS}). Now \eqref{scalar}
implies
\begin{equation}\label{q-flat}
\bar{Q}_2 = \tr \left(  \left(1\!-\!\frac{r^2}{2} \Rho\right)^{-1}
\Rho\right) = \sum_{k\ge 0} \left(\frac{r^2}{2}\right)^k \tr
(\Rho^{k+1}) =  Q_2 + \frac{r^2}{2} |\Rho|^2 + \dots,
\end{equation}
and it is not hard, although it becomes tedious for large $N$, to
determine the contribution $i^* \bar{P}_2^{N-1} (\bar{Q}_2)$ to
$Q_{2N}$. We shall apply this observation in Section \ref{computer}.

We finish the present section with a brief discussion of a test of
Conjecture \ref{main-c} for {\em general} metrics. It deals with the
contributions of the powers of the Yamabe operator $\bar{P}_2$ and
extends the observation concerning the contribution of $(\B,\Rho)$
to $Q_6$ in Section \ref{intro}. Here we compare the contributions
of
\begin{equation}\label{obst}
(\Rho,\Omega^{(N-2)})
\end{equation}
to $Q_{2N}$ and
\begin{equation}\label{bar}
(-1)^{N-1} \frac{(2N\!-\!2)!!}{(2N\!-\!3)!!} i^* \bar{P}_2^{N-1}
(\bar{Q}_2).
\end{equation}
The tensor $\Omega^{(N-2)}$ is one of Graham's extended obstruction
tensors \cite{G-ext}. In particular,
$$
\Omega^{(1)} = \frac{\B}{4-n}.
$$
On the right-hand side of \eqref{URF}, the contribution \eqref{obst}
only comes from the term $i^* \bar{P}_2^{N-1}(\bar{Q}_2)$. On the
other hand, its contribution to $Q_{2N}$ can be captured by its
relation to $v_{2N}$:
\begin{equation}\label{q-v}
Q_{2N} = \cdots + (-1)^N 2^{2N-1} N! (N\!-\!1)! v_{2N}.
\end{equation}
For $2N=n$, the holographic formula \eqref{holo} is such a relation.
The suppressed lower order terms in \eqref{q-v} are not influenced
by $\Omega^{(N-2)}$. In \cite{juhl}, such extensions of \eqref{holo}
were proposed and discussed in detail for subcritical $Q_2$, $Q_4$
and $Q_6$. For $Q_8$ in dimension $n \ge 8$ we expect that
\begin{equation}\label{q8-new}
\frac{1}{2^4 4! 3!} Q_8 = 8 v_8 + 6 \T_2^*
\left(\frac{n}{2}\!-\!4\right)(v_6) + 4
\T_4^*\left(\frac{n}{2}\!-\!4\right)(v_4) +
\T_6^*\left(\frac{n}{2}\!-\!4\right)(v_2).
\end{equation}
We combine \eqref{q-v} with the fact that \eqref{obst} enters into
$v_{2N}$ with the weight
$$
\frac{(-1)^{N-1}}{2^{N-1} N!}.
$$
This follows from Graham's theory \cite{G-ext}. Hence \eqref{obst}
contributes to $Q_{2N}$ through
\begin{equation}\label{obst-q}
-2^N (N\!-\!1)! (\Rho,\Omega^{(N-2)}).
\end{equation}
Now in order to determine its contribution to \eqref{bar}, it
suffices to trace its role in
$$
i^* (\partial^2/\partial r^2)^{N-1} (\bar{Q}_2),
$$
where $\bar{Q}_2$ is given by \eqref{scalar}. Graham \cite{G-ext}
proved that the expansion
\begin{equation*}
h_r = h - \Rho r^2 + h_{(4)} r^4 + \cdots + h_{^(2N-2)} r^{2N-2} +
h_{(2N)} r^{2N} + \cdots
\end{equation*}
has the structure
\begin{equation}\label{FG-obst}
\frac{1}{2} h_{(2k)} = \frac{(-1)^k}{2^k k!} \left( \Omega^{(k-1)} +
(k\!-\!1) (\Rho \Omega^{(k-2)} + \Omega^{(k-2)} \Rho) + \cdots
\right).
\end{equation}
Thus, it suffices to consider the contributions of
$$
2 (\Rho,h_{(2N-2)}), \quad (2N\!-\!2) (\Rho,h_{(2N-2)}) \quad
\mbox{and} \quad 2N \tr (h_{(2N)})
$$
to the Taylor-coefficients of $r^{2N-1}$ in $\tr(h_r^{-1}
\dot{h}_r)$. Using \eqref{FG-obst} we find the contribution
$$
4 \frac{(-1)^{N-1}}{2^{N-1} (N\!-\!1)!} (\Rho,\Omega^{(N-2)}).
$$
It follows that
$$
i^* (\partial^2/\partial r^2)^{N-1} (\bar{Q}_2) = (-1)^N 2
\frac{(2N\!-\!2)!}{2^{N-1} (N\!-\!1)!} (\Rho,\Omega^{(N-2)}) +
\cdots,
$$
i.e., \eqref{bar} yields the contribution
$$
-2^N (N\!-\!1)! (\Rho,\Omega^{(N-2)}).
$$
It coincides with \eqref{obst-q}.

\begin{table}[p]
\begin{tabular}{c|c|c|c|c|c|c|c|c|c|c}
$j$ & $9$ & $8$ & $7$ & $6$ & $5$ & $4$ & $3$ & $2$ & $1$ & $0$  \\
\hline &&&&&&&&&& \\[-2mm]
$N=1$ & & & & & & & & & & $-1$ \\[1mm]
$N=2$ & & & & & & & & & $1$ & $-1$ \\[1mm]
$N=3$ & & & & & & & & $-1$ & $\frac{2}{3}$ & $-1$ \\[1mm]
$N=4$ & & & & & & & $1$ & $-\frac{3}{5}$ & $\frac{3}{5}$ & $-1$ \\[1mm]
$N=5$ & & & & & & $-1$ & $\frac{4}{7}$ & $-\frac{18}{35}$ &
$\frac{4}{7}$ & $-1$ \\[1mm]
$N=6$ & & & & & $1$ & $-\frac{5}{9}$ & $\frac{10}{21}$ &
$-\frac{10}{21}$ & $\frac{5}{9}$ & $-1$ \\[1mm]
$N=7$ & & & & $-1$ & $\frac{6}{11}$ & $-\frac{5}{11}$ &
$\frac{100}{231}$ & $-\frac{5}{11}$ & $\frac{6}{11}$ & $-1$ \\[1mm]
$N=8$ & & & $1$ & $-\frac{7}{13}$ & $\frac{63}{143}$ &
$-\frac{175}{429}$ & $\frac{175}{429}$ & $-\frac{63}{143}$ &
$\frac{7}{13}$ & $-1$ \\[1mm]
$N=9$ & & $-1$ & $\frac{8}{15}$ & $-\frac{28}{65}$ &
$\frac{56}{143}$ & $-\frac{490}{1287}$ & $\frac{56}{143}$ &
$-\frac{28}{65}$ & $\frac{8}{15}$ & $-1$ \\[1mm]
$N=10$ & $1$ & $-\frac{9}{17}$ & $\frac{36}{85}$ & $-\frac{84}{221}$
& $\frac{882}{2431}$ & $-\frac{882}{2431}$ & $\frac{84}{221}$ &
$-\frac{36}{85}$ & $\frac{9}{17}$ & $-1$
\end{tabular}
\bigskip

\caption{\, The coefficients $\alpha_j^{(N)}$ for $N \le
10$}\label{co}
\end{table}

\section{The structure of the coefficients $a_I^{(N)}$}\label{structure}

The right-hand sides of \eqref{URF} are generated by the algorithm
described in Section \ref{status}. In the present section, we
formulate a conjectural description of the coefficients $a_I^{(N)}$
in terms of polynomials $r_I$ which are canonically associated to
compositions $I$. These polynomials are generated by a much simpler
algorithm.

\subsection{The polynomials $r_I$ and their role}\label{gen-struc}

The polynomials $r_I$ are defined recursively as interpolation
polynomials on the sets
\begin{equation}\label{sv-k}
\SV(k) =
\left\{\frac{1}{2}-k,\dots,-\frac{1}{2},\frac{1}{2}\right\}, \; k
\in \N_0
\end{equation}
of half-integers, and on certain sets of negative integers.

First of all, we define the polynomials $r_{(k)}$, $k \in \N$. These
play the role of building blocks of the general case. Let $r_{(k)}$
be defined as the unique polynomial of degree $2k\!-\!1$ with
(simple) zeros in the integers in the interval $[-(k\!-\!1),-1]$ so
that $r_{(k)}$ is constant on $\SV(k)$, and has constant term
\begin{equation}\label{r-ct}
r_{(k)}(0) = (-1)^{k-1} \frac{(2k\!-\!3)!!}{k!}.
\end{equation}
Equivalently, $r_{(k)}$ can be defined as the interpolation
polynomial which is characterized by its $2k$ values
\begin{equation}\label{single}
\begin{split}
r_{(k)}(-i) & = 0 \qquad \mbox{for all $i = 1,\dots,k-1$}, \\
r_{(k)} \left(\frac{1}{2}-i\right) & = (-2)^{-(k-1)}
\frac{\left(\frac{1}{2}\right)_{k-1}} {(k\!-\!1)!} \qquad \mbox{for
all $i = 0,1,\dots,k$}.
\end{split}
\end{equation}
The equivalence of both characterizations follows from Lagrange's
formula.

In order to define $r_I$ for a general composition $I$, we introduce
some more notation. For any $I$, we define the rational number
\begin{equation}\label{sum-prod}
\R_I = \sum_{I=(J_1,\dots,J_M)}
r_{J_1} \left(\frac{1}{2}\right) \; \cdots \;
r_{J_M} \left(\frac{1}{2}\right),
\end{equation}
where the sum runs over all compositions $J_1, \dots, J_M$ which
form a subdivision of $I$, i.e., the sequence of natural numbers
which is obtained by writing the entries of $J_1$ followed by the
entries of $J_2$ etc., coincides with the sequence which defines
$I$. In particular, $\R_{(k)}$, $\R_{(j,k)}$ and $\R_{(i,j,k)}$ are
given by the values of the respective sums
\begin{align*}
r_{(k)}, \quad r_{(j,k)} + r_{(j)} r_{(k)} \quad \mbox{and} \quad
r_{(i,j,k)} + r_{(i,j)} r_{(k)} + r_{(i)} r_{(j,k)} + r_{(i)}
r_{(j)} r_{(k)}
\end{align*}
at $x= \frac{1}{2}$. Next, using the polynomials $r_I$, we define
\begin{multline}\label{CC}
\CC_{(I_1,\dots,I_m)}(x) \\ = r_{(I_1,\dots,I_m)}(x) + \R_{(I_1)}
\cdot r_{(I_2,\dots,I_m)}(x) + \dots + \R_{(I_1,\dots,I_{m-1})}
\cdot r_{(I_m)}(x).
\end{multline}
$\CC_I$ differs from $r_I$ by a lower degree polynomial.

Now let $r_I$ be a polynomial of degree $2|I|-1$ so that
\begin{equation}\label{third-a}
r_I(-i) = 0 \quad \mbox{for all $i = 1,\dots,|I|$, $i\ne
I_{\text{last}}$}
\end{equation}
and
\begin{equation}\label{CC-const}
\CC_I (x) \quad \mbox{is constant on $\SV(|I|)$}.
\end{equation}
Here $I_{\text{last}}$ denotes the last entry in the composition $I
= (I_{\text{first}},\dots,I_{\text{last}})$. The condition
\eqref{CC-const} constitutes the {\bf first} system of
multiplicative recursive formulae for the values of the polynomials
$r_I$.

Now \eqref{third-a} and \eqref{CC-const} determine $(|I|-1) +
(|I|+1) = 2|I|$ values of $r_I$. Since the value of $\CC_I$ on
$\SV(|I|)$ was not chosen, one additional condition is required to
characterize $r_I$. For that purpose, we use the {\bf second} system
of multiplicative recursive formulae
\begin{equation}\label{mult-2}
r_{(J,k)}(0) + r_J(k) \cdot r_{(k)}(0) = 0
\end{equation}
for the constant terms. The relations \eqref{mult-2} are required to
hold true for all $k \ge 1$ and all compositions $J$. They describe
how {\em all} values of the polynomials $r_I$ on the natural numbers
finally influence the constant terms of polynomials which are
associated to compositions of larger sizes. The values $r_{(k)}(0)$
are given by the explicit formula \eqref{r-ct}.

It follows from the above definition that $r_I$ is determined by the
(values of the) polynomials $r_J$ for all sub-compositions $J$ of
$I$. By iteration, it follows that $r_I$ is determined by the
polynomials $r_{(k)}$ for all $k$ which appear as entries of $I$.

Now we are ready to formulate the conjectural relation between the
coefficients $a_I^{(N)}$ and the values of $r_I$ on $\N$.

\begin{conj}\label{para} For all compositions $I$ and all integers $N \ge
|I|+1$,
\begin{equation}\label{ar}
a_I^{(N)} = \prod_{i=1}^{|I|}
\left(\frac{N\!-\!i}{2N\!-\!2i\!-\!1}\right) r_I(N\!-\!|I|).
\end{equation}
\end{conj}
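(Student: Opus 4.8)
The plan is to prove \eqref{ar} by identifying a recursion satisfied by the algorithmically defined numbers $a_I^{(N)}$ and showing that the right-hand side of \eqref{ar} obeys the same recursion with the same initial data. The first step is to reformulate the algorithm of Section \ref{status} as a single statement about Lagrange interpolation. For fixed $N$ and conformally flat $h$, the scalar polynomial $\lambda \mapsto Q_{2N}^{res}(\lambda) = -(-1)^N D_{2N}^{res}(\lambda)(1)$ has degree $N$ and is pinned down by the $N+1$ values supplied by the factorization identities \eqref{fact-a} (at $\lambda = -\f+N,\dots,-\f+2N-1$) and \eqref{fact-b} (at $\lambda = -\frac{n-1}{2}$). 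Expanding this interpolant, recursively substituting the analogous interpolants for the lower residue families $D_{2N-2j}^{res}$, and finally eliminating every power $i^*\bar P_2^k(1)$ through the (assumed universal) formulae for $Q_{2k}$, produces \eqref{URF} and exhibits each $a_I^{(N)}$ as a finite sum of products of Lagrange weights, one factor per factorization used. I would organise the whole argument as a double induction, outer on the length $m$ of $I$ and inner on $N$, so that every sub-composition $J$ entering the nonlinear recursions \eqref{mult-2} and \eqref{CC-const} is strictly shorter than $I$ and already under control.

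For the base case of length one, the coefficient of $P_{2k}(Q_{2N-2k})$ comes from a single application of \eqref{fact-a} to split off $P_{2k}$, followed by the passage from residue family to $Q$-curvature via \eqref{hol-form}. This is a one-step computation that I would carry out directly, checking that the resulting rational function of $N$ factors as $\prod_{i=1}^{k}\frac{N-i}{2N-2i-1}$ times a polynomial in $N-k$ of degree $2k-1$ satisfying exactly the interpolation conditions \eqref{single} that define $r_{(k)}$; this is the content of \eqref{a-GJMS}. The universal prefactor $\prod_{i=1}^{|I|}\frac{N-i}{2N-2i-1}$ should be isolated once and for all, since it depends only on the size $|I|$ and not on the finer structure of $I$: evaluating the normalising constants $2^{2N}N!$ of Definition \ref{RF} and the analogous constants in the renormalised families $P_{2j}(\lambda)$ at the factorization points, and forming the ratio dictated by the interpolation, yields precisely this product. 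Once the prefactor is stripped away, the remaining $I$-dependent datum is forced to be a polynomial in the single variable $N-|I|$, and the claim is that this polynomial is $r_I$.

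For the inductive step, for $I=(I_1,J)$ one factors $P_{2I_1}$ off the interpolant using \eqref{fact-a}, reducing the weights for $I$ to the weights for $J$ at shifted evaluation points; this is recursion R1. Running the defining properties of $r_I$ through \eqref{ar} gives recursion R2, and the heart of the proof is to match them. Here the half-integer locus $\SV(|I|)$ encodes the single half-integer factorization point $-\frac{n-1}{2}$ of \eqref{fact-b}, the integer zeros \eqref{third-a} encode the vanishing forced by the remaining identities \eqref{fact-a} together with the invariant condition $Q_{2N}^{res}(0)=0$ (cf.\ \eqref{syst-3} and Proposition \ref{flat-8}), and the single excluded zero at $-I_{\text{last}}$ reflects the innermost factorization that attaches $P_{2I_{\text{last}}}$ directly to $Q_{2N-2|I|}$; the shift of the argument by $|I|$ records the cumulative lowering of order along the composition. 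The decisive feature is that rewriting each $i^*\bar P_2^k(1)$ by the lower-order universal formulae is nonlinear and feeds products of earlier coefficients back into $a_I^{(N)}$. I would show that this feedback is exactly the correction term $\CC_I$ of \eqref{CC} together with the multiplicative relation \eqref{mult-2}, so that \eqref{CC-const} becomes the image under \eqref{ar} of \eqref{fact-b} and \eqref{mult-2} becomes the image of the universality substitution, with the quantities $\R_I$ of \eqref{sum-prod} arising as the generating weights of the subdivisions of $I$ produced by iterating that substitution.

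The main obstacle is precisely this nonlinear bookkeeping: the universality substitution couples $a_I^{(N)}$ to all $a_J^{(N')}$ with $J$ a sub-composition and $N'<N$, and reproducing that coupling by \eqref{mult-2}--\eqref{CC-const} requires a clean combinatorial identity expressing how the ``sum over subdivisions'' structure of $\R_I$ is compatible with the Lagrange expansion of nested residue families; I expect this identity to be the genuine mathematical core, with the half-integer/integer interpolation dictionary serving as the conceptual key that makes it transparent. A second, more serious difficulty is that the substitution itself presupposes the universality of the lower-order formulae, which is open beyond \eqref{q6-ex} (and for general metrics beyond \eqref{q4-univ}): a complete proof would either have to establish universality in tandem, extending the comparison-of-two-characterisations argument used for \eqref{q4-univ} and \eqref{q6-ex} (the transition from \eqref{alph} and \eqref{bet}) to all $N$, or else carry the dimension $n$ as a free parameter throughout and verify the asserted dimension-independence of $a_I^{(N)}$ as an output rather than assuming it as an input.
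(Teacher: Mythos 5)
The statement you are addressing is Conjecture \ref{para}, and the paper offers no proof of it. Its only support there is empirical: the coefficients $a_I^{(N)}$ generated by the algorithm of Section \ref{status} agree with \eqref{ar} for all $N\le 14$, together with consistency checks such as Lemma \ref{expl-case}, the examples of Section \ref{examp}, and the sphere and Einstein tests of Section \ref{spheres}; Section \ref{open} states explicitly that a \emph{conceptual} explanation of the interpolation description is missing. So your attempt cannot be compared with a proof in the paper; it has to stand on its own, and as written it does not: it is a plausible programme whose decisive steps are announced rather than carried out.

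Concretely, three gaps. First, the base case is misdescribed: $a_{(k)}^{(N)}$ is not produced by ``a single application of \eqref{fact-a}''. The algorithm expands $D_{2N}^{res}$ through the whole tower of lower residue families and then eliminates every $i^*\bar{P}_2^j(1)$ via the lower-order universal formulae, so contributions to $P_{2k}(Q_{2N-2k})$ accumulate from many branches of that expansion, including terms regenerated by the substitution step; even this ``one-step computation'' is a genuine multi-branch bookkeeping problem. Second, after isolating the prefactor $\prod_{i=1}^{|I|}\frac{N-i}{2N-2i-1}$ you assert that the remainder ``is forced to be a polynomial in the single variable $N-|I|$'' of degree $2|I|-1$; nothing in your argument forces this. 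The Lagrange weights are rational functions of $N$, and the polynomiality after stripping the prefactor is precisely the structural claim that was extrapolated from numerical data in Section \ref{app} — it is part of what must be proved, not a consequence you may assume. Third, the heart of the matter — that the nonlinear feedback of the universality substitution coincides with the relations \eqref{CC-const} and \eqref{mult-2}, with the quantities $\R_I$ of \eqref{sum-prod} arising as subdivision weights — is exactly the identity you yourself flag as ``the genuine mathematical core'' and only ``expect'' to hold. Together with the acknowledged dependence on the unproven universality of the lower-order formulae (Conjecture \ref{main-c}), this leaves the proposal as a sensible strategy for attacking an open conjecture, not a proof of it.
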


Conjecture \ref{para} is supported by the observation that all
coefficients in the presentations \eqref{URF} of the $Q$-curvatures
$Q_{2N}$ with $N \le 14$ are correctly reproduced by \eqref{ar}. We
recall that for $Q_{28}$ the sum in \eqref{URF} already contains
$2^{13}-1$ terms.

In particular, we obtain uniform descriptions of {\em all}
coefficients in the universal recursive formulae for $Q_6$, $Q_8$
and $Q_{10}$ in terms of the polynomials $r_I$ for all compositions
$I$ with $|I| \le 4$. In Section \ref{examp}, we shall discuss these
examples in more detail.

Note that \eqref{ar} implies
$$
\alpha_j^{(N)} = \sum_{|I|=j} a_I^{(N)} = \prod_{i=1}^{j}
\left(\frac{N\!-\!i}{2N\!-\!2i\!-\!1}\right) \sum_{|I|=j}
r_I(N\!-\!|I|).
$$
Thus, under Conjecture \ref{para} the identity
\begin{align*}
\alpha_j^{(N)} & = (-1)^{j-1} {N\!-\!1 \choose j}
\frac{(2j\!-\!1)!! (2N\!-\!2j\!-\!3)!!}{(2N\!-\!3)!!} \nonumber \\
& = (-1)^{j-1} \frac{(2j\!-\!1)!!}{j!} \frac{(N\!-\!1) \dots
(N\!-\!j)} {(2N\!-\!3)\dots(2N\!-\!2j\!-\!1)}
\end{align*}
(see \eqref{aston}) is equivalent to
\begin{equation}\label{sum-r}
\sum_{|I|=j} r_I (x) = (-1)^{j-1} \frac{(2j\!-\!1)!!}{j!}.
\end{equation}

\begin{ex}\label{double} The polynomial $r_{(j,k)}$ is characterized
by its zeros in
$$
\left\{-(j+k),\dots,-1\right\} \setminus \left\{-k\right\},
$$
the constancy of
$$
\CC_{(j,k)}(x) = r_{(j,k)}(x) + r_{(j)}\left(\frac{1}{2}\right)
r_{(k)}(x)
$$
on $\SV(j+k)$, and the relation
$$
r_{(j,k)}(0) = - r_{(j)}(k) \, r_{(k)}(0).
$$
Note that $\CC_{(j,k)}$ is constant on $\SV(j+k)$ iff
\begin{equation}\label{s-double}
s_{(j,k)}(x) = - r_{(j)}\left(\frac{1}{2}\right)  s_{(k)}(x)
\end{equation}
on $\SV(j+k)$, where
\begin{equation}\label{sI}
s_I (x) = r_I (x) - r_I \left(\frac{1}{2}\right).
\end{equation}
In particular, $s_{(k,1)} = 0$ on $\SV(k+1)$.
\end{ex}

In terms of $s_I$, the condition \eqref{CC-const} is equivalent to
the condition that the polynomial
\begin{equation}\label{s-const}
s_{(I_1,\dots,I_m)}(x) + \R_{(I_1)} \cdot s_{(I_2,\dots,I_m)}(x) +
\dots + \R_{(I_1,\dots,I_{m-1})} \cdot s_{(I_m)}(x)
\end{equation}
vanishes on $\SV(|I|)$. For instance, for compositions with three
entries, \eqref{s-const} states that
\begin{equation}\label{s-triple}
s_{(i,j,k)}(x) = - r_{(i)}\left(\frac{1}{2}\right) s_{(j,k)}(x) -
\left[ r_{(i,j)} + r_{(i)} r_{(j)} \right] \left(\frac{1}{2}\right)
s_{(k)}(x)
\end{equation}
on $\SV(i+j+k)$. This generalizes \eqref{s-double}.

\eqref{s-double} implies that $s_{(j,k)}$ vanishes on $\SV(k)$ and
\begin{equation*}
s_{(j,k)}\left(-\frac{1}{2}-k\right) = - r_{(j)}
\left(\frac{1}{2}\right) s_{(k)}\left(-\frac{1}{2}-k\right).
\end{equation*}
The latter relation is a special case of
\begin{equation}\label{mult-4}
s_{(J,k)}\left(-\frac{1}{2}-k\right) = - r_J\left(\frac{1}{2}\right)
\cdot s_{(k)}\left(-\frac{1}{2}-k\right)
\end{equation}
which holds true for all compositions $J$ and all $k \ge 2$.
\eqref{mult-4} is a formula for the value of $r_{(J,k)}$ at the {\em
largest} half-integer in the set $\frac{1}{2}-\N_0$ for which this
value differs from $r_{(J,k)}(\frac{1}{2})$. It is a consequence of
\eqref{s-const}.

Finally, we note that the values of $r_I$ at $x=-I_{\text{last}}$
satisfy the {\bf third} system of multiplicative recursive relations
\begin{equation}\label{mult-1}
r_{(J,k,j)}(-j) = - r_J(k) \cdot r_{(k,j)}(-j)
\end{equation}
for all $j, k \ge 1$ and all compositions $J$. We summarize both
relations \eqref{mult-2} and \eqref{mult-1} in
\begin{equation}\label{mult}
r_{(J,k,j)}(-j) = - r_J(k) \cdot r_{(k,j)}(-j)
\end{equation}
for all $j\ge 0$, $k \ge 1$ and all $J$. Here we use the convention
$r_{(I,0)} = r_I$. With the additional convention $r_{(0)}=-1$,
\eqref{mult} makes sense also for $J=(0)$.

\subsection{Examples}\label{examp}

In the present section, we explicate and confirm Conjecture
\ref{para} in a number of important special cases.

\subsubsection{The polynomials $r_I$ for $|I| \le 4$}\label{pol-4}

We determine the polynomials $r_I$ which are responsible for the
coefficients in the universal recursive formulae for $Q_{2N}$, $N
\le 5$. These are the polynomials $r_I$ for all compositions $I$ of
size $|I| \le 4$.

\begin{ex}\label{ex1} We consider the polynomials $r_I$ for
compositions $I$ of size $|I| \le 2$. First of all, $r_{(1)}=1$. The
polynomials $r_{(1,1)}$ and $r_{(2)}$ for compositions $I$ of size
$|I|=2$ are listed in Table \ref{r2}. They are characterized as
follows by their properties. Both polynomials are of degree $2|I|-1
= 3$ and satisfy the respective relations
\begin{align*}
r_{(1,1)}\left(\frac{1}{2}\right) & =
r_{(1,1)}\left(-\frac{1}{2}\right) =
r_{(1,1)}\left(-\frac{3}{2}\right) = - \frac{5}{4}, \\
r_{(1,1)}(-2) & = 0,
\end{align*}
and
\begin{align*}
r_{(2)} \left(\frac{1}{2}\right) & =
r_{(2)}\left(-\frac{1}{2}\right) =
r_{(2)}\left(-\frac{3}{2}\right) = - \frac{1}{4}, \\
r_{(2)}(-1) & = 0
\end{align*}
(see Table \ref{v2}). The values $-\frac{5}{4}$ and $-\frac{1}{4}$
are given by
$$
-\frac{5}{4} = (-2)^{-1} \left(\frac{1}{2}+2\right)_1 \quad
\mbox{and} \quad -\frac{1}{4} = (-2)^{-1}
\left(\frac{1}{2}\right)_1,
$$
respectively (see \eqref{sigma-value}). Alternatively, the value of
$r_{(1,1)}$ on the set $\SV(2)$ is determined by the recursive
relation
$$
r_{(1,1)}(0) = - r_{(1)}(1) \cdot r_{(1)}(0) = - 1
$$
(see \eqref{mult-2}) for its constant term. Similarly, the value of
$r_{(2)}$ on the set $\SV(2)$ can be determined by the relation
$r_{(2)}(0) = -\frac{1}{2}$ (see \eqref{r-ct}).
\end{ex}

\begin{ex}\label{ex2} The polynomials
$$
r_{(1,1,1)}, \; r_{(1,2)}, \; r_{(2,1)}, \; r_{(3)}
$$
for compositions $I$ of size $|I|=3$ are listed in Table \ref{r3}.
These four polynomials of degree $5$ are determined as follows by
their properties. First of all, $r_{(3)}$ and $r_{(2,1)}$ are
characterized by their respective zeros in $x=-1,-2$ and $x=-2,-3$,
and their respective values
$$
\frac{3}{32} = (-2)^{-2} \frac{\left(\frac{1}{2}\right)_2}{2!} \quad
\mbox{and} \quad \frac{7}{16} = (-2)^{-2}
\frac{\left(\frac{1}{2}\right)_1 \left(\frac{1}{2}+3\right)_1}{1!1!}
$$
on the set $\SV(3)$ (see \eqref{sigma-value} and Table \ref{v3}).
Alternatively, $r_{(3)}$ is constant on $\SV(3)$, and the value of
the constant is determined by its constant term $r_{(3)}(0) =
\frac{1}{2}$ (see \eqref{r-ct}). The values of $r_{(2,1)}$ on
$\SV(3)$ are determined by the constancy of
$$
\CC_{(2,1)} = r_{(2,1)} + r_{(2)} \left(\frac{1}{2}\right) r_{(1)} =
r_{(2,1)} - \frac{1}{4}
$$
on this set, and the relation
$$
r_{(2,1)}(0) = - r_{(2)}(1) \cdot r_{(1)}(0) = -1
$$
(see \eqref{mult-2}). Similarly, $r_{(1,2)}$ is characterized by its
zeros in $x=-1,-3$, the constancy of
$$
\CC_{(1,2)} = r_{(1,2)} + r_{(1)} \left(\frac{1}{2}\right) r_{(2)} =
r_{(1,2)} + r_{(2)},
$$
on the set $\SV(3)$, and the relation
$$
r_{(1,2)}(0) = - r_{(1)}(2) \cdot r_{(2)}(0) = - r_{(2)}(0)
$$
(see \eqref{mult-2}). These are special cases of Example
\ref{double}. Finally, $r_{(1,1,1)}$ has zeros in $x=-2,-3$,
$\CC_{(1,1,1)}$ is constant on $\SV(3)$, i.e., $r_{(1,1,1)} +
r_{(1,1)}$ is constant on $\SV(3)$, and
$$
r_{(1,1,1)}(0) = - r_{(1,1)}(1) \cdot r_{(1)}(0) = - r_{(1,1)}(1)
$$
(see \eqref{mult-2}).
\end{ex}

\begin{ex}\label{ex3} The polynomials $r_I$ for compositions $I$
of size $|I|=4$ are listed in Table \ref{r4}. We characterize these
eight degree $7$ polynomials in terms of their properties. Their
values on $\SV(4)$ are displayed in Table \ref{v4}. First of all,
the interpolation polynomial $r_{(4)}$ is defined as in
\eqref{single}. A special case of Example \ref{double} yields a
characterization of $r_{(3,1)}$. In particular, $s_{(3,1)} = 0$ on
$\SV(4)$. Note also that $r_{(4)}$ and $r_{(3,1)}$ coincide with the
averages $\sigma_{(4,4)}$ and $\sigma_{(3,4)}$ (see
\eqref{special-sigma}). These polynomials can be characterized as in
Section \ref{aver} by their zeros and their values on the set
$\SV(4)$. The polynomials $r_{(2,2)}$ and $r_{(1,3)}$ are also
covered by Example \ref{double}. The central facts are that
$\CC_{(2,2)}$ and $\CC_{(1,3)}$ are constant on $\SV(4)$. We recall
that this is equivalent to
$$
s_{(2,2)} = - \R_{(2)} \cdot s_{(2)} \quad \mbox{and} \quad
s_{(1,3)} = - \R_{(1)} \cdot s_{(3)}
$$
on $\SV(4)$. Next, the polynomials $r_{(2,1,1)}$ and $r_{(1,2,1)}$
both have zeros in $\left\{-2,-3,-4\right\}$. Moreover, the
functions
\begin{equation*}
\CC_{(2,1,1)} = r_{(2,1,1)} + \R_{(2)} \cdot r_{(1,1)} + \R_{(2,1)}
\cdot  r_{(1)}
\end{equation*}
and
\begin{equation*}
\CC_{(1,2,1)} = r_{(1,2,1)} + \R_{(1)} \cdot r_{(2,1)} + \R_{(1,2)}
\cdot r_{(1)}
\end{equation*}
are constant on the set $\SV(4)$ (see \eqref{CC}), and we have the
recursive relations
$$
r_{(2,1,1)}(0) = - r_{(2,1)}(1) \cdot r_{(1)}(0) \quad \mbox{and}
\quad r_{(1,2,1)}(0) = - r_{(1,2)}(1) \cdot r_{(1)}(0)
$$
for the constant terms (see \eqref{mult-2}). Note that
$\CC_{(2,1,1)}$ and $\CC_{(1,2,1)}$ are constant on $\SV(4)$ iff
$$
s_{(2,1,1)} = - \R_{(2)} \cdot s_{(1,1)} \quad \mbox{and} \quad
s_{(1,2,1)} = - \R_{(1)} \cdot s_{(2,1)}
$$
on $\SV(4)$, respectively (see \eqref{s-const}). Similar arguments
apply to $r_{(1,1,1,1)}$ and $r_{(1,1,2)}$. These polynomials vanish
on the respective sets
$$
\left\{-2,-3,-4\right\} \quad \mbox{and} \quad
\left\{-1,-3,-4\right\},
$$
the functions
$$
\CC_{(1,1,2)} = r_{(1,1,2)} + \R_{(1)} \cdot r_{(1,2)} + \R_{(1,2)}
\cdot r_{(2)}
$$
and
$$
\CC_{(1,1,1,1)} = r_{(1,1,1,1)} + \R_{(1)} \cdot r_{(1,1,1)} +
\R_{(1,1)} \cdot r_{(1,1)} + \R_{(1,1,1)} \cdot r_{(1)}
$$
are constant on $\SV(4)$, and
$$
r_{(1,1,1,1)}(0) = - r_{(1,1,1)}(1) \cdot r_{(1)}(0) \quad
\mbox{and} \quad  r_{(1,1,2)}(0) = - r_{(1,1)}(2) \cdot r_{(2)}(0)
$$
(see \eqref{mult-2}). Note that $\CC_{(1,1,2)}$ and
$\CC_{(1,1,1,1)}$ are constant on $\SV(4)$ iff
$$
s_{(1,1,2)} = - \R_{(1)} \cdot s_{(1,2)} \quad \mbox{and} \quad
s_{(1,1,1,1)} = - \R_{(1)} \cdot s_{(1,1,1)} - \R_{(1,1)} \cdot
s_{(1,1)},
$$
respectively (see \eqref{s-const}). The listed properties of $s_I$
and $r_I$ can be easily verified using Tables \ref{v2} -- \ref{v4}
and Tables \ref{vi2} -- \ref{vi4}. Here we use $\R_{(1,1)} = -
\frac{1}{4}$ and $\R_{(1,1,1)} = \frac{1}{32}$.
\end{ex}

These results can be used to confirm Conjecture \ref{para} for the
coefficients in the universal formulae for $Q_{2N}$ for $N \le 5$.
For the calculations of the values of the polynomials $r_I$ we apply
the formulae in Table \ref{r2} -- Table \ref{r4}.

\begin{ex}\label{ex-q6} By \eqref{ar}, the three coefficients in the
formula \eqref{q6-ex} for $Q_6$ are given by
\begin{align*}
a_{(1)}^{(3)} & = \frac{2}{3} \cdot r_{(1)}(2) = \frac{2}{3}, \\
a_{(1,1)}^{(3)} & = \frac{2}{3} \cdot r_{(1,1)}(1) = \frac{2}{3}
\cdot \left( -\frac{5}{2} \right) = -\frac{5}{3}, \\
a_{(2)}^{(3)} & = \frac{2}{3} \cdot r_{(2)}(1) = \frac{2}{3}.
\end{align*}
\end{ex}

\begin{ex}\label{q-8} By \eqref{ar}, the seven coefficients in the formula
\eqref{q8-ex} for $Q_8$ are given by the following formulae. First
of all,
$$
a_{(1)}^{(4)} = \frac{3}{5} \cdot r_{(1)}(3) = \frac{3}{5}.
$$
Next,
\begin{align*}
a_{(1,1)}^{(4)} & = \frac{3 \cdot 2}{5 \cdot 3} \cdot r_{(1,1)}(2) =
-4, \\
a_{(2)}^{(4)} & = \frac{3 \cdot 2}{5 \cdot 3} \cdot r_{(2)}(2) =
\frac{2}{5} \cdot \frac{17}{2} = \frac{17}{5}
\end{align*}
and
\begin{align*}
a_{(2,1)}^{(4)} & = \frac{3 \cdot 2}{5 \cdot 3} \cdot r_{(2,1)}(1) =
\frac{2}{5} \cdot 14 = \frac{28}{5}, \\
a_{(3)}^{(4)} & = \frac{3 \cdot 2}{5 \cdot 3} \cdot r_{(3)}(1) = -
\frac{2}{5} \cdot \frac{9}{2} = - \frac{9}{5}.
\end{align*}
Finally,
$$
a_{(1,1,1)}^{(4)} = \frac{3 \cdot 2}{5 \cdot 3} \cdot r_{(1,1,1)}(1)
= -\frac{22}{5} \quad \mbox{and} \quad  a_{(1,2)}^{(4)} = \frac{3
\cdot 2}{5 \cdot 3} \cdot r_{(1,2)}(1) = \frac{8}{5}.
$$
\end{ex}

\begin{ex}\label{q-10} The fifteen coefficients in the universal recursive
formula for $Q_{10}$ (see Section \ref{computer}) are determined by
the values of the polynomials $r_I$ with $|I| \le 4$ at certain
integers. In particular,
\begin{align*}
a_{(1,3)}^{(5)} & = \frac{4!}{105} \cdot r_{(1,3)}(1) = - \frac{69}{35}, \\
a_{(2,1)}^{(5)} & = \frac{4!}{105} \cdot r_{(2,1)}(2) =
\frac{176}{5}
\end{align*}
and
$$
a_{(2)}^{(5)} = \frac{12}{35} \cdot r_{(2)}(3) = \frac{312}{35}.
$$
Similar straightforward calculations reproduce the remaining twelve
coefficients.
\end{ex}

\subsubsection{Some closed formulae}\label{closed}

For some compositions, Conjecture \ref{para} allows to derive closed
formulae for the coefficients in the universal recursive formulae.
Here we discuss such formulae for the coefficients of the extreme
contributions $P_2(Q_{2N-2})$ and $P_{2N-2}(Q_2)$.

\begin{lemm}\label{expl-case} Under Conjecture \ref{para},
$$
a_{(1)}^{(N)} = \alpha_{(1)}^{(N)} = \frac{N\!-\!1}{2N\!-\!3} \quad
\mbox{and} \quad  a_{(N-1)}^{(N)} = (-1)^{N-1}
\frac{N\!-\!1}{2N\!-\!3} (2N\!-\!5)
$$
for $N \ge 2$.
\end{lemm}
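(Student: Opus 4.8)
The plan is to read both coefficients directly off the formula \eqref{ar} of Conjecture \ref{para}, so that the whole statement reduces to evaluating the building-block polynomials $r_{(k)}$ at the single point $x=1$. For $a_{(1)}^{(N)}$ this is essentially immediate: taking $I=(1)$, so $|I|=1$, the product in \eqref{ar} collapses to the single factor $\tfrac{N-1}{2N-3}$, while $r_{(1)}\equiv 1$ gives $r_{(1)}(N-1)=1$; hence $a_{(1)}^{(N)}=\tfrac{N-1}{2N-3}$. Since $(1)$ is the only composition of size $1$, the sum $\alpha_1^{(N)}=\sum_{|I|=1}a_I^{(N)}$ of \eqref{ap} has a single summand, so $\alpha_{(1)}^{(N)}=a_{(1)}^{(N)}$, giving the first equality.

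For the second coefficient I would take $I=(N-1)$, a single-entry composition of size $k:=N-1$, so that \eqref{ar} reads $a_{(N-1)}^{(N)}=\big[\prod_{i=1}^{k}\tfrac{N-i}{2N-2i-1}\big]\,r_{(k)}(1)$. The product is elementary: after the substitution $j=k+1-i$ one has $\tfrac{N-i}{2N-2i-1}=\tfrac{j}{2j-1}$, so the product telescopes to $\prod_{j=1}^{k}\tfrac{j}{2j-1}=\tfrac{k!}{(2k-1)!!}=\tfrac{(N-1)!}{(2N-3)!!}$. Thus everything hinges on $r_{(k)}(1)$, and a short check shows the claimed value of $a_{(N-1)}^{(N)}$ is equivalent to the closed form
\[
r_{(k)}(1)=(-1)^{k}\,\frac{(2k-3)\,(2k-3)!!}{(k-1)!}.
\]

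To compute $r_{(k)}(1)$ I would use the interpolation characterization \eqref{single}. Because $r_{(k)}$ vanishes at the integer nodes $-1,\dots,-(k-1)$, Lagrange interpolation over the $2k$ nodes of \eqref{single} expresses $r_{(k)}(1)$ as $c_k$ times a sum of Lagrange cofactors running over only the $k+1$ half-integer nodes $\tfrac12-i$, where $c_k=(-2)^{-(k-1)}(1/2)_{k-1}/(k-1)!$ is the common value of $r_{(k)}$ on $\SV(k)$. Evaluating each cofactor at $x=1$ and converting every arising double factorial to an ordinary factorial via $(2m-1)!!=(2m)!/(2^m m!)$ (together with the analogous Gamma evaluations of the mixed-sign products $\prod_{m=1-i}^{k-1-i}(2m+1)$) collapses the summand: the term indexed by $i$ becomes proportional to $(2i-1)\binom{2k+1}{2i+1}$. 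I expect this sign-and-double-factorial bookkeeping to be the main obstacle; it is purely mechanical but error-prone, and I would arrange it so that the end result is $r_{(k)}(1)$ expressed through the single finite sum $\sum_{i=1}^{k}(2i-1)\binom{2k+1}{2i+1}$.

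Finally I would evaluate that binomial sum in closed form. Writing $n=2k+1$ and splitting $\sum_{j\ \mathrm{odd}}(j-2)\binom{n}{j}$ by means of the standard identities $\sum_{j\ \mathrm{odd}}\binom{n}{j}=2^{n-1}$ and $\sum_{j\ \mathrm{odd}}j\binom{n}{j}=n\,2^{n-2}$ (the latter from $j\binom{n}{j}=n\binom{n-1}{j-1}$), one finds $\sum_{i=1}^{k}(2i-1)\binom{2k+1}{2i+1}=2^{2k-1}(2k-3)+(2k+1)$. Substituting this back yields exactly the closed form for $r_{(k)}(1)$ displayed above, and combining it with the telescoped product produces $a_{(N-1)}^{(N)}=(-1)^{N-1}\tfrac{N-1}{2N-3}(2N-5)$. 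As a consistency check I would verify the outcome against the already-computed low-order coefficients $a_{(1)}^{(2)}=1$, $a_{(2)}^{(3)}=\tfrac23$ and $a_{(3)}^{(4)}=-\tfrac95$ from Examples \ref{ex-q6} and \ref{q-8}.
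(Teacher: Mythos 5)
Your proposal follows essentially the same route as the paper: read $a_{(1)}^{(N)}$ off from $r_{(1)}=1$, telescope the product in \eqref{ar} for $I=(N-1)$, reduce $r_{(k)}(1)$ via the Lagrange representation over the nodes of \eqref{single} to the binomial sum $\sum_i (2i-1)\binom{2k+1}{2i+1}$, and evaluate that sum with the standard identities $\sum_{j\,\mathrm{odd}}\binom{n}{j}=2^{n-1}$ and $\sum_{j\,\mathrm{odd}}j\binom{n}{j}=n2^{n-2}$ — so the argument is correct and matches the paper's proof. One small bookkeeping slip: the Lagrange sum runs over all $k+1$ half-integer nodes, i.e.\ $i=0,\dots,k$, for which $\sum_{i=0}^{k}(2i-1)\binom{2k+1}{2i+1}=(2k-3)2^{2k-1}$; your displayed range $i=1,\dots,k$ together with the value $(2k-3)2^{2k-1}+(2k+1)$ is internally consistent as stated but would not reproduce the clean closed form for $r_{(k)}(1)$ when substituted back, so the lower limit should be $i=0$.
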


\begin{proof} The first formula follows from $r_{(1)} = 1$. The second
claim is a consequence of the Lagrange representation of
$r_{(N-1)}$. By \eqref{ar},
\begin{equation}\label{a-rep}
a_{(N-1)}^{(N)} = \prod_{i=1}^{N-1}
\left(\frac{N\!-\!i}{2N\!-\!2i\!-\!1} \right) r_{(N-1)}(1),
\end{equation}
where the polynomial $r_{(N-1)}$ is characterized by \eqref{single}.
Now by Lagrange's formula,
$$
r_{(k)}(x) = (-2)^{-(k-1)} \frac{\left(\frac{1}{2}\right)_{k-1}}
{(k\!-\!1)!} \sum_{i=0}^k \prod_{j=0, \; j \ne i}^k \left(
\frac{x+j-\frac{1}{2}}{j-i}\right) \prod_{j=1}^{k-1}
\left(\frac{x+j}{j+\frac{1}{2}-i}\right).
$$
Hence
$$
r_{(k)}(1) = (-2)^{-(k-1)} \frac{\left(\frac{1}{2}\right)_{k-1}}
{(k\!-\!1)!} \sum_{i=0}^k \prod_{j=0, \; j \ne i}^k \left(
\frac{j+\frac{1}{2}}{j-i}\right) \prod_{j=1}^{k-1}
\left(\frac{j+1}{j+\frac{1}{2}-i}\right).
$$
A calculation shows that the latter formula is equivalent to
$$
r_{(k)}(1) = (-1)^k 2^{-(2k-1)} \frac{(2k\!-\!3)!!}{(k\!-\!1)!}
\sum_{i=0}^k (2i\!-\!1) \begin{pmatrix} 2k+1 \\2i+1 \end{pmatrix}.
$$
It follows that
$$
r_{(N-1)}(1) = (-1)^{N-1} 2^{-(2N-3)}
\frac{(2N\!-\!5)!!}{(N\!-\!2)!} \sum_{i=0}^{N-1} (2i\!-\!1)
\begin{pmatrix} 2N-1 \\2i+1 \end{pmatrix}.
$$
Hence by \eqref{a-rep},
$$
a_{(N-1)}^{(N)} = (-1)^{N-1} 2^{-(2N-3)} \frac{N\!-\!1}{2N\!-\!3}
\sum_{i=0}^{N-1} (2i\!-\!1)
\begin{pmatrix} 2N-1 \\ 2i+1 \end{pmatrix},
$$
i.e., the assertion is equivalent to
$$
\sum_{i=0}^{N-1}(2i\!-\!1) \begin{pmatrix} 2N-1 \\2i+1
\end{pmatrix} = (2N\!-\!5) 2^{2N-3}.
$$
The latter identity follows by subtracting
$$
2 \sum_{i=0}^{N-1} \begin{pmatrix} 2N-1 \\2i+1 \end{pmatrix} =
2^{2N-1}
$$
from half of the difference of
$$
\sum_{i=0}^{2N-1} i \begin{pmatrix} 2N\!-\!1 \\ i \end{pmatrix}  =
(2N\!-\!1) 2^{2N-2} \;\; \mbox{and} \;\; \sum_{i=0}^{2N-1} (-1)^i i
\begin{pmatrix} 2N\!-\!1 \\ i \end{pmatrix}  = 0.
$$
The proof is complete. \end{proof}

\subsubsection{On the multiplicative relations for the constant
terms} \label{sec-mult}

The first system of multiplicative recursive relations concerns the
values of the polynomials $r_I$ on the set $\SV(|I|)$ of
half-integers. Their role was already exemplified in Examples
\ref{ex1} -- \ref{ex3}. The second and the third system of
multiplicative recursive relations concern the values of the
polynomials $r_I$ at $x=0$ and $x=-I_{\text{last}}$. The constant
terms satisfy the relations
\begin{equation}\label{mult-first}
r_{(J,k)}(0) = -r_J(k) \cdot r_{(k)}(0).
\end{equation}

\begin{ex}\label{ex-mult1} We use \eqref{mult-first} to determine the
constant values $r_I(0)$ of the polynomials $r_I$ for all
compositions $I$ with $|I|=5$. These values are listed in Table
\ref{vi5}. From this table it is evident that the values
$-r_{(J,1)}(0)$ with $|J|=4$ coincide with the values which are
listed in Table \ref{vi4} for $x=1$. Similarly, the values
$r_{(J,2)}(0)$ with $|J|=3$ easily follow from the values in Table
\ref{vi3} for $x=2$ using $r_{(2)}(0)=-\frac{1}{2}$ and
\eqref{mult-first}. Finally, the values $r_{(J,3)}(0)$ with $|J|=2$
follow from the values in Table \ref{vi2} for $x=3$ using
$r_{(3)}(0) = \frac{1}{2}$ and \eqref{mult-first}.
\end{ex}

\section{Further comments}\label{open}

The treatment of $Q$-curvatures in the present paper suggests a
number of further studies. Some of these are summarized in the
following.

Of course, the main open problems are Conjecture \ref{main-c} and
Conjecture \ref{para}.

The proposed universal recursive formulae for $Q$-curvatures involve
respective lower order $Q$-curvatures and lower order
GJMS-operators. These formulae can be made more explicit by
combining them with formulae for GJMS-operators. For the discussion
of recursive formulae for these operators (as well as alternative
recursive formulae for $Q$-curvatures) we refer to
\cite{juhl-power}.

All recursive formulae for $Q$-curvatures involve a term which is
defined through a power of the Yamabe operator $\bar{P}_2$. Its
structure remains to be studied.

In Section \ref{status}, the universality of the recursive formulae
for $Q_4$, $Q_6$ and $Q_8$ was proved (for locally conformally flat
metrics) by comparing {\em two} formulae for the respective
quantities $\dot{Q}_{2N}^{res}(-\f+N)$, $N=2,3,4$. This method
deserves a further development. In fact, it should yield a full
proof of the universality. Along this way, computer assisted
calculations confirm the universality (in the locally conformally
flat category) for not too large $N$.

Through Conjecture \ref{para}, the coefficients in the recursive
formulae for $Q$-curvatures are linked to interpolation polynomials
$r_I$ which are characterized by their values on integers and
half-integers in $[-|I|,1]$. A {\em conceptual} explanation of that
description is missing.

The polynomials $r_I$ should be explored systematically. In
particular, the identity \eqref{sum-r} and the properties of the
averages $\sigma_{(k,j)}$ formulated in Section \ref{aver} remain to
be proved.

The coefficients $\alpha_j^{(N)}$ are expected to have a nice
generating function $\G$ (see \eqref{gen-f}). Can one phrase the
structure of the polynomials $r_I$ in terms of generating functions,
too? In particular, it seems to be natural to study the generating
function
\begin{align*}
Q({\bf{x}};y) & = \sum_{0 \le |I| \le N-1}
\frac{(2N\!-\!3)!!}{(2N\!-\!2)!!} \, a_I^{(N)} {\bf{x}}^I
y^{N-1-|I|}, \; {\bf{x}} = (x_1,x_2,\dots).
\end{align*}
This function refines $\G$. In fact, for ${\bf{x}} = \diag (x) =
(x,x,\dots)$, \eqref{ap} and \eqref{gen-f} imply
$$
Q(\diag(x);y) = - \G(-x,y).
$$
Under Conjecture \ref{para}, $Q(\cdot;\cdot)$ can be expressed in
terms of the polynomials $r_I$. A calculation shows that
$$
Q({\bf{x}};y) = \sum_{I} \frac{1}{2^{|I|}} \left( \sum_{N \ge 0}
\left(\frac{1}{2}\right)_N r_I(N\!+\!1) \frac{y^N}{N!} \right)
{\bf{x}}^I.
$$

\section{Appendix}\label{app}

In the present section, we describe part of the numerical data which
led to the formulation of Conjecture \ref{main-c} and Conjecture
\ref{para}. We start with explicit versions of the universal
recursive formulae for $Q_{2N}$ with $N = 5,\dots,8$. Then we
describe a test of the universality of the recursive formulae for
round spheres. We display the polynomials $r_I$ and their values on
integers and half-integers for compositions $I$ with $|I| \le 5$.
Finally, we formulate some remarkable properties of the averages of
the polynomials $r_I$ over certain sets of compositions.

\subsection{Explicit formulae for $Q_{2N}$ for $N \le 8$}\label{computer}

Explicit versions of the universal recursive formulae for $Q_{2N}$
for $N=2,3,4$ were given in Section \ref{intro}. Here we add the
corresponding universal recursive formulae for $Q_{10}$, $Q_{12}$,
$Q_{14}$ and $Q_{16}$. These formulae are generated by the algorithm
of Section \ref{status}, i.e., the displayed formulae for higher
order $Q$-curvatures $Q_{2N}$ are to be understood in the sense of
Conjecture \ref{main-c} stating that the generated expressions
coincide with $Q$-curvature.

In dimension $n=10$, the algorithm yields the following formula for
$Q_{10}$ with $16$ terms.
\medskip

\begin{sloppypar} \noindent\(\frac{4}{7} P_2 (Q_{8}) - \frac{66}{7} P_2^2
(Q_6) - \frac{184}{5} P_2^3 (Q_4) - \frac{2012}{35} P_2^4 (Q_2) +
\frac{312}{35} P_4 (Q_6) - \frac{908}{35} P_4^2 (Q_2) -
\frac{456}{35} P_6 (Q_4) + \frac{20}{7} P_8 (Q_2) + \frac{76}{5} P_2
P_4 (Q_4) - \frac{69}{35} P_2 P_6 (Q_2) + \frac{176}{7} P_2^2 P_4
(Q_2) + \frac{176}{5} P_4 P_2 (Q_4 ) + \frac{376}{7} P_4 P_2^2 (Q_2)
- \frac{594}{35} P_6 P_2 (Q_2) + \frac{688}{35} P_2 P_4 P_2 (Q_2) +
\frac{128}{35} i^* \bar{P}_2^4 (\bar{Q}_2). \)
\end{sloppypar}
\medskip

The derivation of this formula assumes, in particular, that
\eqref{q8-ex} for $Q_8$ holds true in dimension $n=10$. By
Proposition \ref{flat-8}, this assumption is satisfied for
conformally flat metrics. Hence the above formula is proved for such
metrics (Corollary \ref{rec-q10}). Conjecture \ref{main-c} states
that the formula is universally true for $n \ge 10$.

Next, the algorithm yields the following conjectural formula for
$Q_{12}$ in dimension $n=12$ with $32$ terms.
\medskip

\begin{sloppypar} \noindent\(\frac{5}{9} P_2 (Q_{10}) - \frac{1180}{63}
P_2^2 (Q_8) - \frac{442}{3} P_2^3 (Q_6) - \frac{38312}{63} P_2^4
(Q_4) - \frac{8260}{9} P_2^5 (Q_2) + \frac{1150}{63} P_4 (Q_8) -
\frac{18533}{63} P_4^2 (Q_4) - \frac{356}{7} P_6 (Q_6) +
\frac{1990}{63} P_8 (Q_4) - \frac{35}{9} P_{10} (Q_2) +
\frac{208}{3} P_2 P_4 (Q_6) - \frac{1576}{9} P_2 P_4^2(Q_2) -
\frac{276}{7} P_2 P_6 (Q_4) + \frac{152}{63} P_2 P_8 (Q_2) +
\frac{18980}{63} P_2^2 P_4 (Q_4) - \frac{1555}{21} P_2^2 P_6 (Q_2) +
\frac{2832}{7} P_2^3 P_4 (Q_2) + \frac{388}{3} P_4 P_2 (Q_6) +
\frac{33680}{63} P_4 P_2^2 (Q_4) + \frac{50968}{63} P_4 P_2^3 (Q_2)
+ \frac{524}{7} P_4 P_6 (Q_2) - \frac{3556}{9} P_4^2 P_2 (Q_2) -
\frac{1116}{7} P_6 P_2 (Q_4) - \frac{1690}{7} P_6 P_2^2 (Q_2) +
\frac{2672}{21} P_6 P_4 (Q_2) + \frac{2420}{63} P_8 P_2 (Q_2) +
\frac{1632}{7} P_2 P_4 P_2 (Q_4) + \frac{22160}{63} P_2 P_4 P_2^2
(Q_2) - \frac{1027}{21} P_2 P_6 P_2 (Q_2) + \frac{25520}{63} P_2^2
P_4 P_2 (Q_2) - \frac{22432}{63} P_4 P_2 P_4 (Q_2) - \frac{256}{63}
i^* \bar{P}_2^5 (\bar{Q}_2). \)
\end{sloppypar}
\medskip

This formula for $Q_{12}$ was derived under the assumptions that the
above formulae for $Q_8$ and $Q_{10}$ hold true in dimension $n=12$.
Computer calculations confirm this assumption for locally
conformally flat metrics (see the comment in Section \ref{open}).
Conjecture \ref{main-c} states that the above formula is universally
true for $n \ge 12$.

The following formulae for $Q_{14}$ and $Q_{16}$ contain $64$ and
$128$ terms, respectively. Their generation assumes that the above
formulae for $Q_8, \dots, Q_{12}$ hold true in the respective
dimensions $14$ and $16$. Conjecture \ref{main-c} asserts that these
formulae are universal.

$Q_{14}$ is given by
\medskip

\begin{sloppypar} \noindent\(\frac{6}{11} P_2 (Q_{12}) -\frac{1085}{33} P_2^2 (Q_{10})
-\frac{14140}{33} P_2^3 (Q_8) -\frac{256362}{77} P_2^4 (Q_6)
-\frac{444680}{33} P_2^5 (Q_4) -\frac{4685236}{231} P_2^6 (Q_2)
+\frac{1070}{33} P_4 (Q_{10}) -\frac{127068}{77} P_4^2 (Q_6)
+\frac{965266}{231} P_4^3 (Q_2) -\frac{11260}{77} P_6 (Q_8)
-\frac{41058}{77} P_6^2 (Q_2) +\frac{13540}{77} P_8 (Q_6)
-\frac{2050}{33} P_{10} (Q_4) +\frac{54}{11} P_{12} (Q_2)
+\frac{7180}{33} P_2 P_4 (Q_8) -\frac{99842}{33} P_2 P_4^2 (Q_4)
-\frac{21594}{77} P_2 P_6 (Q_6) +\frac{1700}{21} P_2 P_8 (Q_4)
-\frac{95}{33} P_2 P_{10} (Q_2) +\frac{135600}{77} P_2^2 P_4 (Q_6)
-\frac{93560}{21} P_2^2 P_4^2 (Q_2) -\frac{100900}{77} P_2^2 P_6
(Q_4) +\frac{39016}{231} P_2^2 P_8 (Q_2) +\frac{1547996}{231} P_2^3
P_4 (Q_4) -1657 P_2^3 P_6 (Q_2) +\frac{691568}{77} P_2^4 P_4 (Q_2)
+\frac{11800}{33} P_4 P_2 (Q_8) +\frac{214980}{77} P_4 P_2^2 (Q_6)
+\frac{2615216}{231} P_4 P_2^3 (Q_4) +\frac{562552}{33} P_4 P_2^4
(Q_2) +\frac{99632}{77} P_4 P_6 (Q_4) -\frac{39220}{231} P_4 P_8
(Q_2) -\frac{61304}{11} P_4^2 P_2 (Q_4) -\frac{1938340}{231} P_4^2
P_2^2 (Q_2) -\frac{62019}{77} P_6 P_2 (Q_6) -\frac{251820}{77} P_6
P_2^2 (Q_4) -\frac{379314}{77} P_6 P_2^3 (Q_2) +\frac{150004}{77}
P_6 P_4 (Q_4) +\frac{10520}{21} P_8 P_2 (Q_4) +\frac{174380}{231}
P_8 P_2^2 (Q_2) -\frac{96380}{231} P_8 P_4 (Q_2) -\frac{2405}{33}
P_{10} P_2 (Q_2) +\frac{102888}{77} P_2 P_4 P_2 (Q_6)
+\frac{1247200}{231} P_2 P_4 P_2^2 (Q_4) +\frac{1876304}{231} P_2
P_4 P_2^3 (Q_2) +\frac{8808}{11} P_2 P_4 P_6 (Q_2)
-\frac{133528}{33} P_2 P_4^2 P_2 (Q_2) -\frac{5914}{7} P_2 P_6 P_2
(Q_4) -\frac{97765}{77} P_2 P_6 P_2^2 (Q_2) +\frac{52056}{77} P_2
P_6 P_4 (Q_2) +\frac{22376}{231} P_2 P_8 P_2 (Q_2)
+\frac{1372640}{231} P_2^2 P_4 P_2 (Q_4) +\frac{2066000}{231} P_2^2
P_4 P_2^2 (Q_2) -\frac{126565}{77} P_2^2 P_6 P_2 (Q_2)
+\frac{690288}{77} P_2^3 P_4 P_2 (Q_2) -\frac{1299544}{231} P_4 P_2
P_4 (Q_4) +\frac{15278}{11} P_4 P_2 P_6 (Q_2) -\frac{580960}{77} P_4
P_2^2 P_4 (Q_2) +\frac{124956}{77} P_4 P_6 P_2 (Q_2)
+\frac{15256}{7} P_6 P_2 P_4 (Q_2) + 2608 P_6 P_4 P_2 (Q_2)
-\frac{831296}{231} P_2 P_4 P_2 P_4 (Q_2) -\frac{1739296}{231} P_4
P_2 P_4 P_2 (Q_2) + \frac{1024}{231} i^*\bar{P}_2^6 (\bar{Q}_2) \).
\end{sloppypar}
\medskip

$Q_{16}$ is given by
\medskip

\begin{sloppypar} \noindent \( \frac{7}{13} P_2 (Q_{14}) -\frac{7560}{143} P_2^2
(Q_{12}) -\frac{440020}{429} P_2^3(Q_{10})
-\frac{1831120}{143}P_2^4(Q_8) -\frac{13946520}{143} P_2^5(Q_6)
-\frac{168379936}{429}P_2^6(Q_4) -\frac{253032464}{429} P_2^7(Q_2)
+\frac{7497}{143}P_4(Q_{12}) -\frac{917380}{143} P_4^2(Q_8)
+\frac{37930786}{429}P_4^3(Q_4) -\frac{49735}{143} P_6(Q_{10})
-\frac{1688928}{143}P_6^2(Q_4) +\frac{292925}{429} P_8(Q_8)
-\frac{67235}{143}P_{10}(Q_6) +\frac{15393}{143} P_{12}(Q_4)
-\frac{77}{13}P_{14}(Q_2) +\frac{234640}{429}P_2P_4(Q_{10})
-\frac{3427872}{143} P_2P_4^2(Q_6)
+\frac{25852064}{429}P_2P_4^3(Q_2) -\frac{178440}{143} P_2P_6(Q_8)
-\frac{630732}{143}P_2P_6^2(Q_2) +\frac{10720}{13} P_2P_8(Q_6)
-\frac{4760}{33}P_2P_{10}(Q_4) +\frac{480}{143} P_2P_{12}(Q_2)
+\frac{1010000}{143}P_2^2P_4(Q_8) -\frac{41694760}{429}
P_2^2P_4^2(Q_4) -\frac{137640}{13}P_2^2P_6(Q_6) +\frac{1778320}{429}
P_2^2P_8(Q_4) -\frac{142100}{429}P_2^2P_{10}(Q_2)
+\frac{7409088}{143} P_2^3P_4(Q_6)
-\frac{55899776}{429}P_2^3P_4^2(Q_2) -\frac{5536944}{143}
P_2^3P_6(Q_4) +\frac{2179520}{429}P_2^3P_8(Q_2) +\frac{7634864}{39}
P_2^4P_4(Q_4) -\frac{6964156}{143}P_2^4P_6(Q_2)
+\frac{112242560}{429} P_2^5P_4(Q_2)
+\frac{354760}{429}P_4P_2(Q_{10}) +\frac{1484320}{143} P_4P_2^2(Q_8)
+\frac{11325168}{143}P_4P_2^3(Q_6) +\frac{136807744}{429}
P_4P_2^4(Q_4) +\frac{205619360}{429}P_4P_2^5(Q_2) +\frac{132576}{13}
P_4P_6(Q_6) -\frac{1765000}{429} P_4 P_8 (Q_4) +\frac{142520}{429}
P_4P_{10}(Q_2) -39144 P_4^2 P_2 (Q_6) -\frac{67612960}{429}
P_4^2P_2^2(Q_4) -\frac{101618576}{429} P_4^2 P_2^3 (Q_2)
-\frac{3353544}{143} P_4^2 P_6 (Q_2) +\frac{50673224}{429} P_4^3 P_2
(Q_2) -\frac{418680}{143} P_6 P_2 (Q_8) -\frac{3198660}{143} P_6
P_2^2 (Q_6) -\frac{12885264}{143} P_6 P_2^3 (Q_4)
-\frac{19368456}{143} P_6 P_2^4 (Q_2) +\frac{2036928}{143} P_6 P_4
(Q_6) -\frac{5127424}{143} P_6 P_4^2 (Q_2) +\frac{236480}{143} P_6
P_8 (Q_2) -\frac{192312}{13} P_6^2 P_2 (Q_2) +\frac{43480}{13} P_8
P_2 (Q_6) +\frac{5782880}{429}P_8 P_2^2 (Q_4) +\frac{8693680}{429}
P_8 P_2^3 (Q_2) -\frac{3602780}{429}P_8 P_4 (Q_4)
+\frac{340920}{143} P_8 P_6 (Q_2) -\frac{41720}{33} P_{10} P_2 (Q_4)
-\frac{815500}{429} P_{10}P_2^2 (Q_2) +\frac{464800}{429} P_{10} P_4
(Q_2) +\frac{17640}{143} P_{12} P_2 (Q_2) +\frac{753600}{143} P_2
P_4 P_2 (Q_8) +\frac{5731360}{143} P_2 P_4 P_2^2 (Q_6)
+\frac{69163904}{429} P_2 P_4 P_2^3 (Q_4) +\frac{103923136}{429}P_2
P_4 P_2^4 (Q_2) +\frac{2735488}{143} P_2 P_4 P_6 (Q_4)
-\frac{1124480}{429} P_2 P_4 P_8 (Q_2) -\frac{883904}{11} P_2 P_4^2
P_2 (Q_4) -\frac{51796640}{429} P_2 P_4^2 P_2^2 (Q_2)
-\frac{86046}{13} P_2 P_6 P_2 (Q_6) -\frac{3803560}{143}P_2 P_6
P_2^2 (Q_4) -\frac{5713556}{143} P_2 P_6 P_2^3 (Q_2)
+\frac{2278296}{143} P_2 P_6 P_4 (Q_4) +\frac{76352}{33} P_2 P_8 P_2
(Q_4) +\frac{1490080}{429} P_2 P_8 P_2^2 (Q_2) -\frac{75200}{39} P_2
P_8 P_4 (Q_2) -\frac{72100}{429} P_2 P_{10} P_2 (Q_2) +43040 P_2^2
P_4 P_2 (Q_6) +\frac{6755200}{39} P_2^2 P_4 P_2^2 (Q_4)
+\frac{10151360}{39}P_2^2 P_4 P_2^3 (Q_2) +\frac{283680}{11} P_2^2
P_4 P_6 (Q_2) -\frac{55694240}{429} P_2^2 P_4^2 P_2 (Q_2)
-\frac{4569640}{143} P_2^2 P_6 P_2 (Q_4) -\frac{6866900}{143} P_2^2
P_6 P_2^2 (Q_2) +\frac{3655360}{143}P_2^2P_6P_4(Q_2)
+\frac{2137760}{429} P_2^2P_8P_2(Q_2)
+\frac{74539904}{429}P_2^3P_4P_2(Q_4)
+\frac{37337920}{143}P_2^3P_4P_2^2(Q_2)
-\frac{6933556}{143}P_2^3P_6P_2(Q_2) +\frac{37394112}{143}
P_2^4P_4P_2(Q_2) -\frac{6013312}{143}P_4P_2P_4(Q_6)
+\frac{45393664}{429} P_4P_2P_4^2(Q_2)
+\frac{4490976}{143}P_4P_2P_6(Q_4) -\frac{160384}{39} P_4P_2P_8(Q_2)
-\frac{5247520}{33}P_4P_2^2P_4(Q_4) +\frac{5653720}{143}
P_4P_2^2P_6(Q_2) -\frac{91195136}{429}P_4P_2^3P_4(Q_2)
+\frac{4402656}{143} P_4P_6P_2(Q_4)
+\frac{6616560}{143}P_4P_6P_2^2(Q_2) -\frac{3521792}{143}
P_4P_6P_4(Q_2) -\frac{192880}{39}P_4P_8P_2(Q_2)
+\frac{15023488}{143} P_4^2P_2P_4(Q_2)
+\frac{6423816}{143}P_6P_2P_4(Q_4) -\frac{1596522}{143}
P_6P_2P_6(Q_2) +\frac{8589120}{143}P_6P_2^2P_4(Q_2)
+\frac{525952}{11} P_6P_4P_2(Q_4)
+\frac{10277440}{143}P_6P_4P_2^2(Q_2) -\frac{3854720}{429}
P_8P_2P_4(Q_2) -\frac{4814320}{429}P_8P_4P_2(Q_2)
-\frac{11501632}{143} P_2P_4P_2P_4(Q_4)
+\frac{2862544}{143}P_2P_4P_2P_6(Q_2)
-\frac{46105600}{429}P_2P_4P_2^2P_4(Q_2)
+\frac{3425184}{143}P_2P_4P_6P_2(Q_2)
+\frac{2535584}{143}P_2P_6P_2P_4(Q_2) +\frac{3042144}{143}
P_2P_6P_4P_2(Q_2) -\frac{4503040}{39}P_2^2P_4P_2P_4(Q_2)
-\frac{20177152}{143}P_4P_2P_4P_2(Q_4)
-\frac{90976640}{429}P_4P_2P_4P_2^2(Q_2)
+\frac{5624584}{143}P_4P_2P_6P_2(Q_2)
-\frac{30378880}{143}P_4P_2^2P_4P_2(Q_2)
+\frac{8582944}{143}P_6P_2P_4P_2(Q_2) -\frac{46084864}{429}
P_2P_4P_2P_4P_2(Q_2) - \frac{2048}{429} i^* \bar{P}_2^7 (\bar{Q}_2)
\).
\end{sloppypar}

\subsection{Tests on round spheres}\label{spheres}

Here we describe some details of a test which confirms the
universality of the displayed formulae for $Q_6$, $Q_8$ on the
spheres $S^n$ of arbitrary even dimension $n$ with the round metric
$h_0$. Similar tests yield the correct values for $Q_{2N}$ for all
$N \le 10$. Basically the same calculations cover the case of
Einstein metrics. This test also illustrates the role of the terms
$i^* \bar{P}_2^k(\bar{Q}_2)$.

On $(S^n,h_0)$, the GJMS-operators are given by the product formulae
\begin{equation}\label{product}
P_{2N} = \prod_{j=\f}^{\f+N-1} (\Delta \!-\! j(n\!-\!1\!-\!j))
\end{equation}
(\cite{comp}, \cite{Beck}, \cite{G-power}). \eqref{product} implies
\begin{equation}\label{p2n-sphere}
P_{2N}(1) = (-1)^N \prod_{j=\f}^{\f+N-1} j(n\!-\!1\!-\!j).
\end{equation}
Using \eqref{q-def}, i.e.,
$$
P_{2N}(1) = (-1)^N (m\!-\!N) Q_{2N}, \quad m = \f,
$$
we find
\begin{equation}\label{q2n-sphere}
Q_{2N} = m \prod_{j=1}^{N-1}(m^2\!-\!j^2).
\end{equation}
These formulae suffice to determine the first $2^{N-1}\!-\!1$ terms
in \eqref{URF}. In order to determine the contributions
$$
i^* \bar{P}_2^{N-1}(\bar{Q}_2),
$$
we note that $\Rho = \frac{1}{2} h_0$, i.e.,
$$
h_r = \left(1-\frac{r^2}{2} \Rho\right)^2 = (1-cr^2)^2 h_0
$$
with $c=\frac{1}{4}$ (by \eqref{terminate}). Hence
$$
\bar{P}_2 = \frac{\partial^2}{\partial r^2} - m r (1\!-\!cr^2)^{-1}
\frac{\partial}{\partial r} - m(m\!-\!2c)(1\!-\!cr^2)^{-1}
$$
on functions which are constant on $M$. Moreover, we have
$$
\bar{Q}_2 = m(1-cr^2)^{-1}
$$
by \eqref{q-flat}.

Now straightforward calculations yield the results
\begin{align*}
i^* \bar{P}_2 (\bar{Q}_2) & = - 2^{-1} m(m\!-\!1)(2 m \!+\! 1), \\
i^* \bar{P}_2^2 (\bar{Q}_2) & = 2^{-2} m(m\!-\!1)(4 m^3 \!-\!
5m\!-\! 6)
\end{align*}
On the other hand, a calculation using \eqref{p2n-sphere} and
\eqref{q2n-sphere} yields
$$
\frac{1}{3} (-5 m^5 + 8m^4 - 5m^3 - 2m^2)
$$
for the sum of the first three terms in the universal formula
\eqref{q6-ex} for $Q_6$. Together with the contribution of $i^*
\bar{P}_2^2 (\bar{Q}_2)$, we obtain
$$
m^5\!-\!5m^3\!+\!4m = m(m^2\!-\!1)(m^2\!-\!4)
$$
which coincides with $Q_6$ by \eqref{q2n-sphere}.

Another calculation using \eqref{p2n-sphere} and \eqref{q2n-sphere}
yields
$$
\frac{1}{5} (-11 m^7 \!+\! 24 m^6 \!-\! 34 m^5 \!+\! 18 m^4 \!+\!
133 m^3 \!-\! 130 m^2)
$$
for the first seven terms in the universal formula for $Q_8$.
Together with the contribution
$$
i^* \bar{P}_2^3 (\bar{Q}_2) = - 2^{-3} m(m\!-\!1)(8 m^5 \!-\! 4 m^4
\!-\! 22 m^3 \!-\! 31 m^2 \!+\! 25 m \!+\! 90)
$$
we find
$$
m^7 \!-\! 14 m^5 \!+\! 49 m^3 \!-\! 36 m =
m(m^2\!-\!1)(m^2\!-\!4)(m^2\!-\!9)
$$
which coincides with $Q_8$ by \eqref{q2n-sphere}.

By \cite{gov-ein}, the product formula \eqref{product} generalizes
in the form
$$
P_{2N}(h) = \prod_{j=\f}^{\f+N-1} \left(\Delta \!-\!
\frac{\tau(h)}{n(n\!-\!1)} j(n\!-\!1\!-\!j)\right)
$$
to Einstein metrics. In particular,
$$
Q_{2N} = \lambda^N m \prod_{j=1}^{N-1} (m^2\!-\!j^2), \quad \lambda
= \frac{\tau}{n(n\!-\!1)}.
$$
Moreover, $\Rho = \frac{1}{2} \lambda h$ and
$$
h_r = \left(1\!-\!\frac{r^2}{2} \Rho \right)^2 =
\left(1\!-\!c\lambda r^2 h \right)^2, \; c = \frac{1}{4}.
$$
Hence (on functions which are constant on $M$),
$$
\bar{P}_2 (h) = \frac{\partial^2}{\partial r^2} - m \lambda r
(1\!-\! c \lambda r^2)^{-1} \frac{\partial}{\partial r} -
m(m\!-\!2c)(1\!-\!c \lambda r^2)^{-1}
$$
and
$$
\bar{Q}_2(h) = m \lambda (1\!-\!c\lambda r^2)^{-1}.
$$
Therefore, for Einstein $h$ with $\tau = n(n\!-\!1)$, the same
calculations as on round spheres, prove \eqref{URF}. For general
scalar curvature, the result follows by rescaling. The assertion is
trivial for $\tau = 0$.

\subsection{The averages $\sigma_{(k,j)}$}\label{aver}

We consider averages of the polynomials $r_I$ over certain sets of
compositions $I$ of the same size $|I|$. We speculate that these
averages can be described in terms of standard interpolation
polynomials.

\begin{defn}[\bf Standard interpolation polynomials] For given integers
$M,N$ such that $N-1 \ge M \ge 0$, let $I_{(M,N)}(x)$ be the
interpolation polynomial of degree $2N-1$ which satisfies
$$
I_{(M,N)}\left(\frac{1}{2}-i\right) = 1, \quad i=0, \dots, N
$$
and
$$
I_{(M,N)} \left(-M-i\right) = 0, \quad i=1,\dots,N-1.
$$
\end{defn}

We use the polynomials $r_I$ for compositions $I$ of size $|I|=j$ to
define the $j$ averages $\sigma_{(k,j)}$, $1 \le k \le j$.

\begin{defn}[\bf Averages] For $j \ge 1$ and $1 \le k \le j$, let
$$
\sigma_{(k,j)}(x) = \sum_{k+|J|=j} r_{(k,J)}(x).
$$
\end{defn}

In particular,
$$
\sigma_{(1,j)} = \sum_{|J|=j-1} r_{(1,J)},
$$
and
\begin{equation}\label{special-sigma}
\begin{split}
\sigma_{(j-2,j)} & = r_{(j-2,1,1)} + r_{(j-2,2)}, \\
\sigma_{(j-1,j)} & = r_{(j-1,1)}, \\
\sigma_{(j,j)} & = r_{(j)}.
\end{split}
\end{equation}

Now we expect that the averages $\sigma_{(k,j)}$ are related to the
interpolation polynomials $I_{(M,N)}$ through the formula
\begin{equation}\label{AI}
\sigma_{(k,j)}(x) = (-2)^{-(j-1)}
\left[\frac{\left(\frac{1}{2}\right)_{k-1}
\left(\frac{1}{2}\!+\!j\right)_{j-k}} {(k\!-\!1)! (j\!-\!k)!}
\right] I_{(j-k,j)}(x).
\end{equation}
In other words, \eqref{AI} states the equalities
\begin{equation}\label{equal}
\sigma_{(k,j)}\left(\frac{1}{2}\right) =
\sigma_{(k,j)}\left(\frac{1}{2}\!-\!1\right) = \dots =
\sigma_{(k,j)}\left(\frac{1}{2}\!-\!j\right),
\end{equation}
claims that this value coincides with
\begin{equation}
(-2)^{-(j-1)} \left[\frac{\left(\frac{1}{2}\right)_{k-1}
\left(\frac{1}{2}\!+\!j\right)_{j-k}} {(k\!-\!1)! (j\!-\!k)!}
\right],
\end{equation}
and asserts that
\begin{equation}\label{zeros}
\sigma_{(k,j)}(-(j-k)-i) = 0 \quad \mbox{for} \quad i=1,\dots,j-1.
\end{equation}

The $j-1$ zeros in \eqref{zeros} are quite remarkable. In fact,
\eqref{zeros} states that $\sigma_{(k,k)} = r_{(k)}$ has zeros in
$x=-1,\dots,-(k-1)$. These are obvious by the definition of
$r_{(k)}$. But for $k<j$, the zeros of $\sigma_{(k,j)}$ in
\eqref{zeros} are {\em not} obvious from the zeros of the individual
terms $r_I$ defining the sum.

Note that the obvious relation
$$
\sum_{|I|=j} r_I(x) = \sum_{k=1}^j \sigma_{(k,j)}(x)
$$
implies
\begin{equation}\label{sum-sigma}
\sum_{k=1}^j \sigma_{(k,j)} \left(\frac{1}{2}\right) = (-1)^{j-1}
\frac{(2j\!-\!1)!!}{j!} 2^{j-1}
\end{equation}
using the conjectural relation \eqref{sum-r}. On the other hand,
\eqref{sum-sigma} would be consequence of the explicit formula
\begin{equation}\label{sigma-value}
\sigma_{(k,j)}\left(\frac{1}{2}\right) = (-2)^{-(j-1)}
\left[\frac{\left(\frac{1}{2}\right)_{k-1}
\left(\frac{1}{2}\!+\!j\right)_{j-k}} {(k\!-\!1)! (j\!-\!k)!}
\right].
\end{equation}
In fact, comparing the coefficients of $x^{j-1}$ on both sides of
the identity
$$
(1-x)^{-\frac{1}{2}} (1-x)^{-(\frac{1}{2}+j)} = (1-x)^{-(1+j)},
$$
we find
$$
\sum_{k=1}^j \frac{\left(\frac{1}{2}\right)_{k-1}
\left(\frac{1}{2}\!+\!j\right)_{j-k}} {(k\!-\!1)! (j\!-\!k)!} =
\frac{(j\!+\!1)_{j-1}}{(j\!-\!1)!} = \frac{(2j\!-\!1)!}{(j\!-\!1)!
j!} = \frac{(2j\!-\!1)!!}{j!}2^{j-1}.
$$
This yields the assertion \eqref{sum-sigma}.

\subsection{The polynomials $r_I$ for compositions of small size}\label{app-r}

In Table \ref{r2} -- Table \ref{r5}, we list the polynomials $r_I$
for all compositions $I$ with $2 \le |I| \le 5$. In each case, we
factorize off the zeros in the negative integers. We recall that
$r_{(1)}=1$.

\begin{table}[p]
\begin{tabular}{c|c}
$I$ & $r_I$ \\[1mm]
\hline & \\[-3mm]
$(1,1)$ & $-\frac{1}{6} (2+x)(3 - 2 x + 4 x^2)$ \\[2mm]
$(2)$ & $\frac{1}{6} (1+x)(-3 + 2 x + 4 x^2)$
\end{tabular}
\bigskip

\caption{\, $r_I$ for compositions $I$ with $|I|=2$}\label{r2}
\end{table}

\begin{table}[p]
\begin{tabular}{c|c}
$I$ & $r_I$ \\[1mm]
\hline & \\[-3mm]
$(1,1,1)$ & $-\frac{1}{60}(2+x)(3+x)(-25+2x+30x^2+48 x^3)$ \\[2mm]
$(1,2)$ & $\frac{1}{30}(1+x)(3+x)(5-12x+6 x^2+16 x^3)$ \\[2mm]
\hline & \\[-3mm]
$(2,1)$ & $\frac{1}{30}(2+x)(3+x)(-5 - 6 x + 30 x^2 + 16 x^3)$ \\[2mm]
$(3)$ & $-\frac{1}{60}(1+x)(2+x)(-15 + 2 x + 42 x^2 + 16x^3)$
\end{tabular}
\bigskip

\caption{\, $r_I$ for compositions $I$ with $|I|=3$}\label{r3}
\end{table}

\begin{table}[p]
\begin{small}
\begin{tabular}{c|c}
$I$ & $r_I$ \\[1mm]
\hline & \\[-3mm]
$(1,1,1,1)$ & $- \frac{1}{2520}(2+x)(3+x)(4+x)(-1155 - 1826x + 5064 x^2 + 6320 x^3 + 2160 x^4)$ \\[2mm]
$(1,1,2)$ & $\frac{1}{252}(1+x)(3+x)(4+x)(-105 - 82 x + 320 x^2 + 416 x^3 + 144 x^4)$ \\[2mm]
$(1,2,1)$ & $\frac{1}{630}(2+x)(3+x)(4+x)(-105 - 136 x + 264 x^2 + 640 x^3 + 240 x^4)$ \\[2mm]
$(1,3)$ & $-\frac{1}{1680}(1+x)(2+x)(4+x)(105 - 254 x - 168 x^2 + 560 x^3 + 240 x^4)$ \\[2mm]
\hline & \\[-3mm]
$(2,1,1)$ & $\frac{1}{252}(2+x)(3+x)(4+x)(-147-146 x+528 x^2 + 608x^3 + 144 x^4)$ \\[2mm]
$(2,2)$ & $-\frac{1}{5040}(1 + x) (3 + x) (4 + x) (-1785 - 2546 x +
7432 x^2 + 9040 x^3 + 2160 x^4)$ \\[2mm]
\hline & \\[-3mm]
$(3,1)$ & $-\frac{1}{560}(2+x)(3+x)(4+x)(-105 - 106 x +424 x^2 + 400 x^3 + 80 x^4)$ \\[2mm]
$(4)$ & $\frac{1}{1008}(1+x)(2+x)(3+x)(-105-50 x+360 x^2 + 272 x^3 +
48x^4)$
\end{tabular}
\end{small}
\bigskip

\caption{\, $r_I$ for compositions $I$ with $|I|=4$}\label{r4}
\end{table}

\begin{table}[p]
\begin{tiny}
\begin{tabular}{c|c}
$I$ & $r_I$ \\[1mm]
\hline & \\[-2mm]
$(1,1,1,1,1)$ & $-\frac{1}{720} (2 + x)(3 + x)(4 + x)(5 + x)
(-1509 -2140 x + 4960 x^2+ 8480 x^3 + 4024 x^4 + 640 x^5)$ \\[2mm]
$(1,1,1,2)$ & $\frac{1}{7560} (1 + x)(3 + x)(4 + x)(5 + x)
(-10143 - 15270 x + 34228 x^2 + 58952 x^3 + 28112 x^4 + 4480 x^5)$ \\[2mm]
$(1,1,2,1)$ & $\frac{1}{2268} (2 + x)(3 + x)(4 + x)(5 + x)
(-2079 - 3000 x + 7000 x^2 + 11680 x^3 + 5600 x^4 + 896 x^5)$ \\[2mm]
$(1,1,3)$ & $-\frac{1}{6048} (1 + x)(2 + x)(4 + x)(5 + x)
(-2079 - 2808 x + 6744 x^2 + 11296 x^3 + 5544 x^4 + 896 x^5)$ \\[2mm]
$(1,2,1,1)$ & $\frac{1}{11340} (2 + x)(3 + x)(4 + x)(5 + x)
(-8127 - 13110 x + 26180 x^2 + 50840 x^3 + 26992 x^4 + 4480 x^5)$ \\[2mm]
$(1,2,2)$ & $-\frac{1}{2160} (1 + x)(3 + x)(4 + x)(5 + x)
(-1197 - 1800 x + 3712 x^2 + 7208 x^3 + 3848 x^4 + 640 x^5)$ \\[2mm]
$(1,3,1)$ & $-\frac{1}{60480} (2 + x)(3 + x)(4 + x)(5 + x)
(-4347 - 9672 x + 9800 x^2 + 38960 x^3 + 25480 x^4 + 4480 x^5)$ \\[2mm]
$(1,4)$ & $\frac{1}{45360} (1 + x)(2 + x)(3 + x)(5 + x)
(945 - 1776 x - 3680 x^2 + 4840 x^3 + 4760 x^4 + 896 x^5)$ \\[2mm]
\hline & \\[-2mm]
$(2,1,1,1)$ & $\frac{1}{7560} (2 + x)(3 + x)(4 + x)(5 + x)
(-14805 - 20172 x + 50960 x^2 + 79280 x^3 + 34048 x^4 + 4480 x^5)$ \\[2mm]
$(2,1,2)$ & $-\frac{1}{11340} (1 + x)(3 + x)(4 + x)(5 + x)
(-14553- 20010 x + 49828 x^2 +78752 x^3 + 33992 x^4 + 4480 x^5)$ \\[2mm]
$(2,2,1)$ & $-\frac{1}{2160} (2 + x)(3 + x)(4 + x)(5 + x)
(-2043 -2808 x + 6920 x^2 + 11120 x^3 + 4840 x^4 + 640 x^5)$ \\[2mm]
$(2,3)$ & $\frac{1}{7560} (1+x)(2+x)(4+x)(5+x)
(-2457-3960 x + 8520 x^2 + 15040x^3 + 6720 x^4 + 896 x^5)$ \\[2mm]
\hline & \\[-2mm]
$(3,1,1)$ & $-\frac{1}{30240} (2 + x)(3 + x)(4 + x)(5 + x)
(-18711- 24000 x + 65240 x^2 + 95120 x^3 + 37576 x^4 + 4480 x^5)$ \\[2mm]
$(3,2)$ & $\frac{1}{7560} (1 + x)(3 + x)(4 + x)(5 + x)
(-3591 - 4866 x + 12716 x^2 + 18904 x^3 + 7504 x^4 + 896 x^5)$ \\[2mm]
\hline & \\[-2mm]
$(4,1)$ & $\frac{1}{45360} (2 + x)(3 + x)(4 + x)(5 + x)
(-4725 - 5736 x + 17080 x^2 + 22480 x^3 + 8120 x^4 + 896 x^5)$ \\[2mm]
$(5)$ & $-\frac{1}{25920} (1+x)(2+x)(3+x)(4+x) (-945 - 888 x + 3320
x^2 + 3760 x^3 + 1240 x^4 + 128 x^5)$
\end{tabular}
\end{tiny}
\bigskip

\caption{\, $r_I$ for compositions $I$ with $|I|=5$}\label{r5}
\end{table}

\subsection{Some values of $r_I$}\label{app-v}

In Table \ref{v2} -- Table \ref{v5}, we list the values of $r_I$ for
$2 \le |I| \le 5$ on the respective sets $\SV(|I|)$ of
half-integers. We write all values as perturbations by $s_I$ of the
respective values at $x=\frac{1}{2}$. From that presentation it is
immediate that the averages $\sigma_{(k,j)}$ are constant on the
respective sets of half-integers, and one can easily read off the
values of $s_I$. In Table \ref{v2} -- Table \ref{v4}, we also
display some values of $r_I$ on half-integers $\not\in \SV(|I|)$.
These influence the values of corresponding polynomials for
compositions of larger size through the multiplicative recursive
relations. In particular,
$$
s_{(2,1)} + s_{(1,2,1)} = 0 \quad \mbox{and} \quad s_{(3)} +
s_{(1,3)} = 0
$$
at $x=-\frac{7}{2}$, and
$$
s_{(3,1)} + s_{(1,3,1)} = 0 \quad \mbox{and} \quad s_{(4)} +
s_{(1,4)} = 0
$$
at $x=-\frac{9}{2}$. These are special cases of $s_{(1,k,1)} +
s_{(k,1)} = 0$ (see \eqref{s-triple}) and $s_{(1,k)} + s_{(k)} = 0$
(see \eqref{s-double}).

Table \ref{vi2} -- Table \ref{vi5} display the values of $r_I$ for
$2 \le |I| \le 5$ on the respective sets of integers in $[-|I|,2]$.
One can easily confirm that the values $r_I(0)$ in Table \ref{vi4}
are determined by the values of $r_I(1)$ in Table \ref{vi3} and
$r_I(2)$ in Table \ref{vi2} according to the relation $r_{(J,k)}(0)
+ r_J(k) r_{(k)}(0) = 0$ (see \eqref{mult-2}). Similarly, the values
$r_I(0)$ in Table \ref{vi5} are determined by the values of $r_I(1)$
in Table \ref{vi4}, $r_I(2)$ in Table \ref{vi3} and $r_I(3)$ in
Table \ref{vi2} (see Section \ref{sec-mult}).

\begin{table}[p]
\begin{tabular}{c|c|c||c|c|c}
$I$ & $-\frac{7}{2}$ & $-\frac{5}{2}$ & $-\frac{3}{2}$ & $-\frac{1}{2}$ & $\frac{1}{2}$ \\[2mm]
\hline & & & & \\[-3mm]
$(1,1)$ & $-\frac{5}{4} + 16$ & $-\frac{5}{4}+4$ & $-\frac{5}{4}$ & $-\frac{5}{4}$ & $-\frac{5}{4}$ \\[2mm]
$(2)$ & $-\frac{1}{4} - 16$ & $-\frac{1}{4}-4$ & $-\frac{1}{4}$ &
$-\frac{1}{4}$ & $-\frac{1}{4}$
\end{tabular}
\bigskip

\caption{\; Values of $r_I$ ($|I|=2$) on
$\frac{1}{2}-\N_0$}\label{v2}
\end{table}
\bigskip

\begin{table}[p]
\begin{tabular}{c|c||c|c|c|c|c}
$I$ & $-\frac{7}{2}$ & $-\frac{5}{2}$ & $-\frac{3}{2}$ &
$-\frac{1}{2}$ &
$\frac{1}{2}$ \\[2mm]
\hline & & & & & \\[-3mm]
$(1,1,1)$ & $\frac{49}{32} + 20$ & $\frac{49}{32} - 4$ & $\frac{49}{32}$ & $\frac{49}{32}$ & $\frac{49}{32}$ \\[2mm]
$(1,2)$ & $\frac{7}{16}-24$ & $\frac{7}{16} + 4$ & $\frac{7}{16}$ & $\frac{7}{16}$ & $\frac{7}{16}$ \\[2mm]
\hline & & & & & \\[-3mm]
$(2,1)$ & $\frac{7}{16}-8$ & $\frac{7}{16}$ & $\frac{7}{16}$ & $\frac{7}{16}$ & $\frac{7}{16}$ \\[2mm]
$(3)$ & $\frac{3}{32} + 12$ & $\frac{3}{32}$ & $\frac{3}{32}$ &
$\frac{3}{32}$ & $\frac{3}{32}$
\end{tabular}
\bigskip

\caption{\; Values of $r_I$ ($|I|=3$) on
$\frac{1}{2}-\N_0$}\label{v3}
\end{table}
\bigskip

\begin{table}[p]
\begin{small}
\begin{tabular}{c|c||c|c|c|c|c}
$I$ & $-\frac{9}{2}$ & $-\frac{7}{2}$ & $-\frac{5}{2}$ &
$-\frac{3}{2}$ & $-\frac{1}{2}$ & $ \frac{1}{2}$ \\[2mm]
\hline & & & & & & \\[-3mm]
$(1,1,1,1)$ & $-\frac{123}{64}+314$ & $-\frac{123}{64}-16$ & $-\frac{123}{64}+5$
& $-\frac{123}{64}$ & $-\frac{123}{64}$ & $-\frac{123}{64}$ \\[2mm]
$(1,1,2)$ & $-\frac{15}{32}-290$ & $-\frac{15}{32}+20$ & $-\frac{15}{32}-5$ & $-\frac{15}{32}$
& $-\frac{15}{32}$ & $-\frac{15}{32}$ \\[2mm]
$(1,2,1)$ & $-\frac{3}{4}-136$ & $-\frac{3}{4} + 8$ & $-\frac{3}{4}$ & $-\frac{3}{4}$
& $-\frac{3}{4}$ & $-\frac{3}{4}$ \\[2mm]
$(1,3)$ & $-\frac{27}{128}+118$ & $-\frac{27}{128}-12$ &
$-\frac{27}{128}$ & $-\frac{27}{128}$ & $-\frac{27}{128}$ & $-\frac{27}{128}$ \\[2mm]
\hline & & & & & & \\[-3mm]
$(2,1,1)$ & $-\frac{15}{32}-110$ & $-\frac{15}{32} + 4$ & -$\frac{15}{32} + 1$ & $-\frac{15}{32}$
& $-\frac{15}{32}$ & $-\frac{15}{32}$ \\[2mm]
$(2,2)$ & $-\frac{39}{128}+116$ & $-\frac{39}{128}-4$ & $-\frac{39}{128}-1$ & $-\frac{39}{128}$
& $-\frac{39}{128}$ & $-\frac{39}{128}$\\[2mm]
\hline & & & & & & \\[-3mm]
$(3,1)$ & $-\frac{27}{128}+18$ & $-\frac{27}{128}$ & $-\frac{27}{128}$ & $-\frac{27}{128}$
& $-\frac{27}{128}$ & $-\frac{27}{128}$ \\[2mm]
$(4)$ & $-\frac{5}{128}-30$ & $-\frac{5}{128}$ & $-\frac{5}{128}$ &
$-\frac{5}{128}$ & $-\frac{5}{128}$ & $-\frac{5}{128}$
\end{tabular}
\end{small}
\bigskip

\caption{\; Values of $r_I$ ($|I|=4$) on
$\frac{1}{2}-\N_0$}\label{v4}
\end{table}
\bigskip

\begin{table}[p]
\begin{small}
\begin{tabular}{c|c|c|c|c|c|c}
$I$ & $-\frac{9}{2}$ & $-\frac{7}{2}$ & $-\frac{5}{2}$ &
$-\frac{3}{2}$ & $-\frac{1}{2}$ & $ \frac{1}{2}$ \\[2mm]
\hline & & & & & & \\[-2mm]
$(1,1,1,1,1)$ & $\frac{1155}{512}-\frac{1025}{4}$ &
$\frac{1155}{512}+\frac{41}{2}$ & $\frac{1155}{512}-\frac{49}{8}$ &
$\frac{1155}{512}$
& $\frac{1155}{512}$ & $\frac{1155}{512}$ \\[2mm]
$(1,1,1,2)$ & $\frac{99}{128}+\frac{941}{4}$ &
$\frac{99}{128}-\frac{51}{2}$ & $\frac{99}{128}+\frac{49}{8}$ &
$\frac{99}{128}$ & $\frac{99}{128}$ & $\frac{99}{128}$ \\[2mm]
$(1,1,2,1)$ & $\frac{55}{64}+110$ & $\frac{55}{64}-10$ &
$\frac{55}{64}$ & $\frac{55}{64}$ & $\frac{55}{64}$ & $\frac{55}{64}$ \\[2mm]
$(1,1,3)$ & $\frac{165}{1024}-95$ & $\frac{165}{1024}+15$ &
$\frac{165}{1024}$ & $\frac{165}{1024}$ & $\frac{165}{1024}$ & $\frac{165}{1024}$ \\[2mm]
$(1,2,1,1)$ & $\frac{55}{64}+\frac{205}{2}$ & $\frac{55}{64}-7$ &
$\frac{55}{64}-\frac{7}{4}$ & $\frac{55}{64}$ & $\frac{55}{64}$ & $\frac{55}{64}$ \\[2mm]
$(1,2,2)$ & $\frac{231}{512}-\frac{217}{2}$ & $\frac{231}{512}+7$ &
$\frac{231}{512}+\frac{7}{4}$ & $\frac{231}{512}$ & $\frac{231}{512}$ & $\frac{231}{512}$ \\[2mm]
$(1,3,1)$ & $\frac{957}{2048}-18$& $\frac{957}{2048}$ &
$\frac{957}{2048}$& $\frac{957}{2048}$ & $\frac{957}{2048}$ & $\frac{957}{2048}$ \\[2mm]
$(1,4)$ & $\frac{55}{512}+30$ & $\frac{55}{512}$ & $\frac{55}{512}$
& $\frac{55}{512}$ & $\frac{55}{512}$ & $\frac{55}{512}$\\[2mm]
\hline & & & & & & \\[-2mm]
$(2,1,1,1)$ & $\frac{99}{128}+\frac{103}{2}$ & $\frac{99}{128}+2$ &
$\frac{99}{128}-\frac{7}{4}$ & $\frac{99}{128}$ & $\frac{99}{128}$ & $\frac{99}{128}$ \\[2mm]
$(2,1,2)$ & $-\frac{11}{128}-\frac{97}{2}$ & $-\frac{11}{128}-3$ &
$-\frac{11}{128}+\frac{7}{4}$ & $-\frac{11}{128}$ & $-\frac{11}{128}$ & $-\frac{11}{128}$ \\[2mm]
$(2,2,1)$ & $\frac{231}{512}-26$ & $\frac{231}{512}-2$ &
$\frac{231}{512}$ & $\frac{231}{512}$ & $\frac{231}{512}$ & $\frac{231}{512}$ \\[2mm]
$(2,3)$ & $\frac{33}{128} + 23$ & $\frac{33}{128} + 3$ &
$\frac{33}{128}$ & $\frac{33}{128}$ & $\frac{33}{128}$ & $\frac{33}{128}$ \\[2mm]
\hline & & & & & & \\[-2mm]
$(3,1,1)$ & $\frac{165}{1024}-\frac{15}{4}$ &
$\frac{165}{1024}-\frac{3}{2}$ & $\frac{165}{1024}-\frac{3}{8}$ & $\frac{165}{1024}$
& $\frac{165}{1024}$ & $\frac{33}{128}$ \\[2mm]
$(3,2)$ & $\frac{33}{128} + \frac{15}{4}$ & $\frac{33}{128} +
\frac{3}{2}$ & $\frac{33}{128} + \frac{3}{8}$ &
$\frac{33}{128}$ & $\frac{33}{128}$ & $\frac{33}{128}$ \\[2mm]
\hline & & & & & & \\[-2mm]
$(4,1)$ & $\frac{55}{512}$ & $\frac{55}{512}$ & $\frac{55}{512}$ & $\frac{55}{512}$ &
$\frac{55}{512}$& $\frac{55}{512}$\\[2mm]
$(5)$ & $\frac{35}{2048}$ & $\frac{35}{2048}$ & $\frac{35}{2048}$ &
$\frac{35}{2048}$& $\frac{35}{2048}$ & $\frac{35}{2048}$
\end{tabular}
\end{small}
\bigskip

\caption{\; Values of $r_I$ ($|I|=5$) on
$\frac{1}{2}-\N_0$}\label{v5}
\end{table}

\begin{table}[p]
\begin{tabular}{c|c|c|c||c|c|c}
$I$ & $-2$ & $-1$ & $0$ & $1$ & $2$ & $3$ \\[2mm]
\hline & & & & & & \\[-3mm]
$(1,1)$ & $0$ & $-\frac{3}{2}$ & $-1$ & $-\frac{5}{2}$ & $-10$ & $-\frac{55}{2}$ \\[2mm]
$(2)$ & $-\frac{3}{2}$ & $0$ & $-\frac{1}{2}$ & $1$ & $\frac{17}{2}$
& $26$
\end{tabular}
\bigskip

\caption{\; The values of $r_I$ ($|I|=2$) on
$\left\{-2,\dots,3\right\}$}\label{vi2}
\end{table}
\bigskip

\begin{table}[p]
\begin{tabular}{c|c|c|c|c||c|c}
$I$ & $-3$ & $-2$ & $-1$ & $0$ & $1$ & $2$ \\[2mm]
\hline & & & & & & \\[-3mm]
$(1,1,1)$ & $0$ & $0$ & $\frac{3}{2}$ & $\frac{5}{2}$ & $-11$ & $-161$ \\[2mm]
$(1,2)$ & $0$ & $\frac{5}{2}$ & $0$ & $\frac{1}{2}$ & $4$ & $\frac{133}{2}$ \\[2mm]
\hline & & & & & & \\[-3mm]
$(2,1)$ & $0$ & $0$ & $1$ & $-1$ & $14$ & $154$ \\[2mm]
$(3)$ & $\frac{5}{2}$ & $0$ & $0$ & $\frac{1}{2}$ & $-\frac{9}{2}$ &
$-57$
\end{tabular}
\bigskip

\caption{\; The values of $r_I$ ($|I|=3$) on $\left\{-3, \dots, 2
\right\}$}\label{vi3}
\end{table}
\bigskip

\begin{table}[p]
\begin{tabular}{c|c|c|c|c|c||c}
$I$ & $-4$ & $-3$ & $-2$ & $-1$ & $0$ & $1$ \\[2mm]
\hline & & & & & & \\[-3mm]
$(1,1,1,1)$ & $0$ & $0$ & $0$ & $-\frac{15}{4}$ & $11$ & $-\frac{503}{2}$ \\[2mm]
$(1,1,2)$ & $0$ & $0$ & $-\frac{5}{2}$ & $0$ & $-5$ & $110$ \\[2mm]
$(1,2,1)$ & $0$ & $0$ & $0$ & $-1$ & $-4$ & $86$ \\[2mm]
$(1,3)$ & $0$ & $-\frac{35}{8}$ & 0 & 0 & $-\frac{1}{2}$ & $-\frac{69}{8}$ \\[2mm]
\hline & & & & & & \\[-3mm]
$(2,1,1)$ & $0$ & $0$ & $0$ & $\frac{3}{2}$ & $-14$ & $235$  \\[2mm]
$(2,2)$ & $0$ & $0$ & $-\frac{15}{8}$ & $0$ & $\frac{17}{4}$ & $-\frac{227}{2}$  \\[2mm]
\hline & & & & & & \\[-3mm]
$(3,1)$ & $0$ & $0$ & $0$ & $-\frac{9}{8}$ & $\frac{9}{2}$ & $-\frac{297}{4}$ \\[2mm]
$(4)$ & $-\frac{35}{8}$ & $0$ & $0$ & $0$ & $-\frac{5}{8}$ &
$\frac{25}{2}$
\end{tabular}
\bigskip

\caption{\; The values of $r_I$ ($|I|=4$) on $\left\{-4, \dots, 1
\right\}$}\label{vi4}
\end{table}

\begin{table}[p]
\begin{small}
\begin{tabular}{c|c|c|c|c|c|c}
$I$ & $-5$ & $-4$ & $-3$ & $-2$ & $-1$ & $0$ \\[2mm]
\hline & & & & & & \\[-2mm]
$(1,1,1,1,1)$& $0$ & $0$ & $0$ & $0$ & $-\frac{33}{2}$ & $\frac{503}{2}$ \\[2mm]
$(1,1,1,2)$ & $0$ & $0$ & $0$ & $\frac{25}{4}$ & $0$ & $-\frac{161}{2}$ \\[2mm]
$(1,1,2,1)$ & $0$ & $0$ & $0$ & $0$ & $10$ & $-110$ \\[2mm]
$(1,1,3)$ & $0$ & $0$ & $\frac{35}{8}$ & $0$ & $0$ & $\frac{55}{4}$  \\[2mm]
$(1,2,1,1)$ & $0$ & $0$ & $0$ & $0$ & $6$ & $-86$ \\[2mm]
$(1,2,2)$ & $0$ & $0$ & $0$ & $\frac{15}{8}$ & $0$ & $\frac{133}{4}$ \\[2mm]
$(1,3,1)$ & $0$ & $0$ & $0$ & $0$ & $\frac{9}{8}$ & $\frac{69}{8}$ \\[2mm]
$(1,4)$ & $0$ & $\frac{63}{8}$ & $0$ & $0$ & $0$ & $\frac{5}{8}$ \\[2mm]
\hline & & & & & & \\[-2mm]
$(2,1,1,1)$ & $0$ & $0$ & $0$ & $0$ & $21$ & $-235$ \\[2mm]
$(2,1,2)$ & $0$ & $0$ & $0$ & $-\frac{5}{2}$ & $0$ & $77$ \\[2mm]
$(2,2,1)$ & $0$ & $0$ & $0$ & $0$ & $-\frac{17}{2}$ & $\frac{227}{2}$ \\[2mm]
$(2,3)$ & $0$ & $0$ & $\frac{7}{2}$ & $0$ & $0$ & $-13$ \\[2mm]
\hline & & & & & & \\[-2mm]
$(3,1,1)$ & $0$ & $0$ & $0$ & $0$ & $-\frac{27}{4}$ & $\frac{297}{4}$\\[2mm]
$(3,2)$ & $0$ & $0$ & $0$ & $\frac{9}{4}$ & $0$ & $-\frac{57}{2}$ \\[2mm]
\hline & & & & & & \\[-2mm]
$(4,1)$& $0$ & $0$ & $0$ & $0$ & $\frac{3}{2}$ & $-\frac{25}{2}$ \\[2mm]
$(5)$ & $\frac{63}{8}$ & $0$ & $0$ & $0$ & $0$ & $\frac{7}{8}$
\end{tabular}
\end{small}
\bigskip

\caption{\; The values of $r_I$ ($|I|=5$) on $\left\{-5, \dots, 0
\right\}$}\label{vi5}
\end{table}

\newpage

\bibliography{conflit2}
\bibliographystyle{plain}

\addcontentsline{toc}{section}{Tables}

\end{document}